\newcommand\lla{\left\langle}
\newcommand\rra{\right\rangle}
\newcommand\lb{\left\bracevert}
\newcommand\rb{\right\bracevert}
\newcommand\cut{\setminus\!\setminus}
\newcommand\wt{\widetilde} 
\newcommand\bbm{\begin{bmatrix}}
\newcommand\ebm{\end{bmatrix}}
\newcommand\0{\textbf{0}}
\newcommand\vb{\textbf{v}}
\newcommand\wb{\textbf{w}}
\newcommand\xb{\textbf{x}}
\newcommand\G{\mathcal{G}}
\newcommand\C{\mathbb{C}}
\newcommand\CP{\mathbb{CP}}
\newcommand\Q{\mathbb{Q}}
\newcommand\R{\mathbb{R}}
\newcommand\Z{\mathbb{Z}}
\newcommand\red[1]{\color{red}#1\color{black}}
\newcommand\FG[1]{\color{ForestGreen}#1\color{black}}
\newcommand\violet[1]{\color{Violet}#1\color{black}}
\newcommand\Navy[1]{\color{NavyBlue}#1\color{black}}
\newcommand\white[1]{\color{white}#1\color{black}}
\newcommand\brown[1]{\color{Brown}#1\color{black}}
\newcommand\Yel[1]{\color{Yellow}#1\color{black}}
\newcommand\Orange[1]{\color{BurntOrange}#1\color{black}}
\definecolor{lllg}{gray}{0.9}
\definecolor{llg}{gray}{0.8}
\definecolor{lg}{gray}{0.7}
\definecolor{mlg}{gray}{0.6}
\definecolor{mg}{gray}{0.5}
\definecolor{mdg}{gray}{0.4}
\definecolor{dg}{gray}{0.3}
\definecolor{ddg}{gray}{0.2}
\definecolor{dddg}{gray}{0.1}
\newcommand\Lg[1]{\color{lg}#1\color{black}}
\newcommand\mlg[1]{\color{mlg}#1\color{black}}
\newcommand{\bs}[1]{\left[#1\right]}
\newcommand{\bbme}[1]{\begin{bmatrix}#1\end{bmatrix}}
\newcommand{\bme}[1]{\begin{matrix}#1\end{matrix}}
\newcommand{\ar}{\xrightarrow[]{}}
\newcommand{\ps}[1]{\lp#1\rp}
\newcommand{\lp}{\left(}
\newcommand{\rp}{\right)}
\newcommand{\zeros}{\mathbf 0}
\newcommand{\bbmes}[1]{\left[\begin{smallmatrix}#1\end{smallmatrix}\right]}
\theoremstyle{plain}
\newtheorem{theorem}{Theorem}[section]
\newtheorem{prop}[theorem]{Proposition}
\newtheorem{cor}[theorem]{Corollary}
\newtheorem{fact}[theorem]{Fact}
\newtheorem*{T:MainMatrix}{Theorem \ref{T:MainMatrix}}
\newtheorem*{T:MainKnotLink}{Theorem \ref{T:MainKnotLink}}
\newtheorem*{T:MainQuadratic2}{Theorem \ref{T:MainQuadratic2}}
\newtheorem*{T:Main4Manifold1}{Theorem \ref{T:Main4Manifold1}}
\newtheorem*{T:Main4Manifold2}{Theorem \ref{T:Main4Manifold2}}
\newtheorem*{T:RationalExtension}{Theorem \ref{T:RationalExtension}}
\newtheorem*{C:square}{Corollary \ref{C:square}}
\theoremstyle{definition}
\newtheorem{question}[theorem]{Question}
\newtheorem{example}[theorem]{Example}
\theoremstyle{remark}
\newtheorem{rem}[theorem]{Remark}
\numberwithin{equation}{section}
\begin{document}

\title[Kink-equivalence]{Kink-equivalence of matrices, spanning surfaces, 4-manifolds, and quadratic forms}
\author[Howards, Kindred, Moore, Tolbert]{ Hugh Howards, Thomas Kindred, \\ W. Frank Moore, and John Tolbert }

\begin{abstract}
All checkerboard surfaces for a given knot in $S^3$ are related by isotopy and ``kinking" and ``unkinking" moves, which change the surfaces' Goeritz matrices like this: $G\leftrightarrow G\oplus [\pm1]=$\scalebox{.6}{$ \bbm G&\0\\ \0^T&\pm1 \ebm$}. %Yet, a checkerboard surface with a Goeritz matrix of the form $G= G\oplus [\pm1]$ need not {\it geometrically} admit an {\it unkinking move}. 
We call two symmetric integer matrices ``kink-equivalent" if they are related by ``kinking'' and ``unkinking'' moves $G\leftrightarrow G\oplus [\pm1]$ and unimodular congruence.  
We prove constructively that every nonsingular symmetric integer matrix is kink-equivalent to a positive-definite matrix and to a negative-definite matrix, and we give %sharp 
bounds on the number of moves required.
This has several implications, e.g.  
every knot %(or link with nullity 0) 
in $S^3$ is ``alternating up to fake unkinking moves" and
every simply connected, closed, topological 4-manifold with nonsingular intersection pairing has a positive blow-up that is homeomorphic to a negative blow-up of a positive-definite, simply connected, closed, topological 4-manifold. 
%and  every nonsingular quadratic form is stably equivalent in a certain sense to positive- and negative-definite quadratic forms.
\end{abstract}

\maketitle

%\tableofcontents
%Test $\lllg{1}$ $\Lg{2}$ $\Lg{3}$ $\Lg{4}$ $\mg{5}$ $\mdg{6}$ $\dg{7}$ $\ddg{8}$ $\dddg{9}$.

\section{Introduction}\label{S:Intro}
In a seminal 1978 paper \cite{gl}, Gordon--Litherland describe a symmetric pairing $\G_F$ on the first homology group of a spanning surface $F$ for a knot $L\subset S^3$, proving in particular that the signature $\sigma(\G_F)$ minus half of the boundary slope of $F$ is an invariant of $L$. %, namely the knot signature: $\sigma(L)=\sigma(\G_F)-\frac{1}{2}s(F)$.  (Everything works for links too, but with more nuance.) 
Their paper is remarkable in several ways, %.  One is that their surfaces $F$ are {\it unoriented} and often nonorientable, a novelty at the time (Kauffman--Taylor had recently proven the main results in the oriented case \cite{kt}).
%Another
one of which is the dual structure of their exposition, which reconciles accessibility with sophistication: after using double-branched covers to prove their main results and convey their deep significance, they take an alternative, ``down-to-earth" approach. The main idea {of that approach} is that 
all spanning surfaces for a given link $L\subset S^3$ are related by attaching and removing tubes and crosscaps,
and
these operations, shown in Figure \ref{Fi:Moves},  change Gordon-Litherland pairings (and the Goeritz matrices that represent them, see \textsection\ref{S:Back}) in predictable ways, namely:
\begin{figure}[t]
\begin{center}
%ToggleBW%\scalebox{.8}{\raisebox{.04\textwidth}{\includegraphics[height=.12\textwidth]{figures/BW/Tubing}} \hspace{.5in} \includegraphics[height=.2\textwidth]{figures/BW/CrosscappingShort}}
%ToggleColor%
\scalebox{.8}{\raisebox{.04\textwidth}{\includegraphics[height=.12\textwidth]{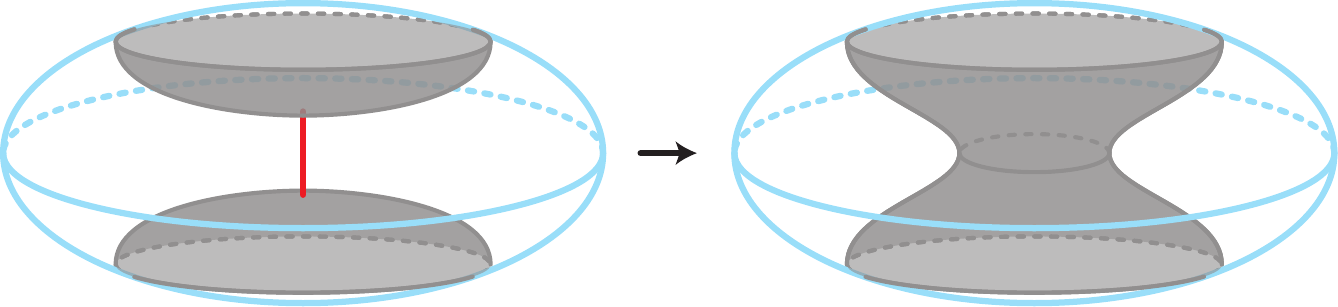}} \hspace{.5in} \includegraphics[height=.2\textwidth]{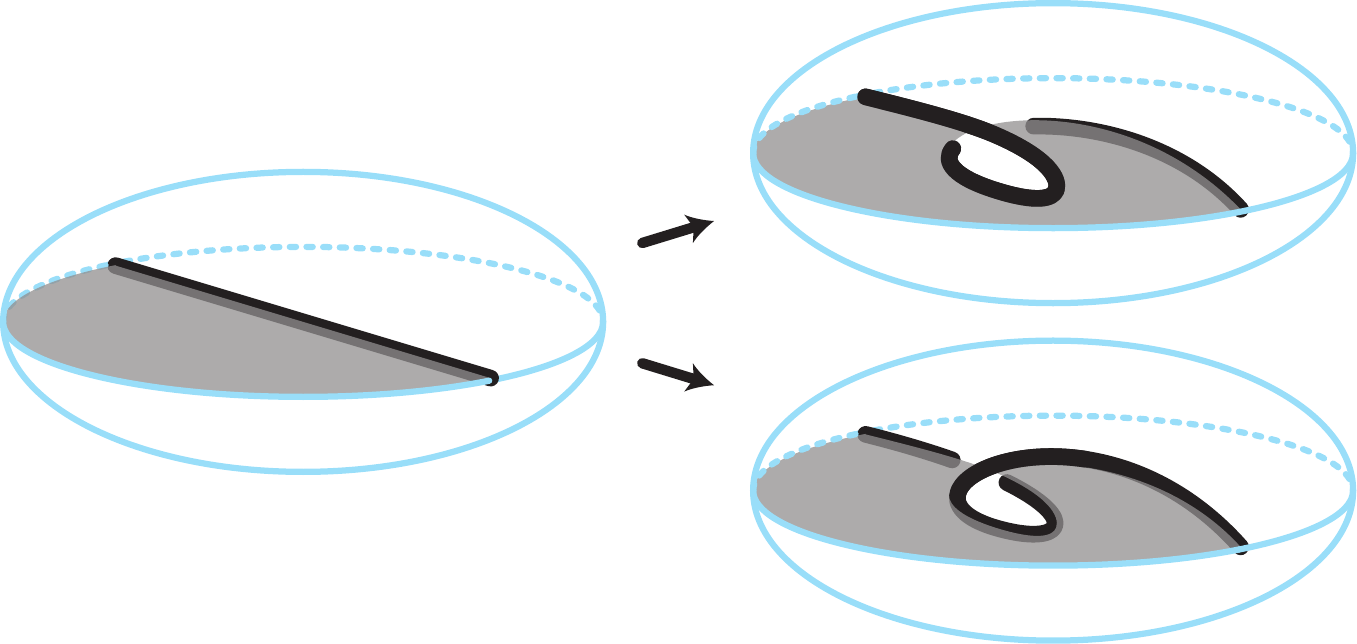}}
\caption{Tubing and kinking moves.}
\label{Fi:Moves}
\end{center}
\end{figure}
\begin{equation}\label{E:G'G''}
G{\leftrightarrow}\bbm G&\vb&\0\\ 
\vb^T&0&1\\
\0^T&1&0 \ebm
\hspace{.4in}\text{and}\hspace{.4in}
G{\leftrightarrow}\bbm G&\0\\ 
\0^T&\pm1 \ebm{=G\oplus[\pm 1]}
\end{equation}
for some $\vb\in\Z^n$.  
Similar arguments apply in the oriented setting, as
all Seifert surfaces for a given oriented link $L\subset S^3$ are related by attaching and removing tubes (no crosscaps needed) \cite{levine}. %\footnote{This leads, among other things, to a simple proof that the single-variable Alexander polynomial of an oriented link is well-defined up to degree shift.}

On the other hand, all checkerboard surfaces for all connected diagrams of a given link $L\subset S^3$ are related by attaching and removing crosscaps (no tubes needed) \cite{encyc}. This implies that these surfaces' Goeritz matrices are all {\bf kink-equivalent}, related by unimodular congruence, ``kinking,'' and ``unkinking'' moves: \footnote{Note that nullity and the absolute value of determinant are invariant under kink-equivalence.}

\begin{equation}\label{E:Kink}
G\leftrightarrow P^TGP\text{ (}P\text{ unimodular)}\hspace{.4in}\text{and}\hspace{.4in}
G\leftrightarrow\bbm G&\0\\ 
\0^T&\pm1 \ebm{=G\oplus[\pm 1]}.
\end{equation}

It is important to note that when $L$ has a spanning surface $F$ with a Goeritz matrix of the form $G={G'\oplus[\pm 1]}$
%\bbmes{G'&\0\\ \0^T&\pm1}$ 
it need not follow that $F$ actually (geometrically) admits an {unkinking move} of the appropriate sign.  See Example \ref{Ex:Naive}.  If $F$ admits no such move, we {say $G'\oplus[\pm1]\to G'$ describes a {\it fake unkinking move} on $F$}. 

Nevertheless, the notion of kink-equivalence of Goeritz matrices is intriguing.
For example, having a spanning surface that is kink-equivalent to positive- and negative-definite spanning surfaces is an extremely restrictive condition on links in $S^3$. In fact, it is precisely the alternating condition; see Theorem \ref{T:Greene} \cite{greene}.  How much less restrictive is the {(naively) }analogous condition on Goeritz matrices?

\begin{question}\label{Q:1}
Is there a non-alternating link $L\subset S^3$ with a spanning surface $F$ whose Goeritz matrix is kink-equivalent to positive- and negative-definite matrices? If so, which links have such $F$?
\end{question}

More fundamentally:

\begin{question}\label{Q:2}
Which symmetric integer matrices are kink-equivalent to both positive- and negative-definite matrices? To  positive- and negative-semidefinite matrices?
\end{question}

We answer these questions completely.  Briefly:

\begin{T:MainMatrix}
Every kink-equivalence class of symmetric integer matrices contains positive- and negative-semidefinite representatives. 
Thus, for a symmetric matrix $G\in\Z^{n\times n}$, the following are equivalent:
\begin{itemize}
\item $G$ is kink-equivalent to a positive-definite matrix,
\item $G$ is kink-equivalent to a negative-definite matrix,
\item $G$ is nonsingular.
\end{itemize}
Moreover, given a nonsingular symmetric matrix $G\in\Z^{n\times n}$,
writing $n_\pm=\frac{1}{2}(n\pm\sigma(G))$, \footnote{These count the number of positive %, zero, 
and negative eigenvalues of $G$; $\sigma(G)$ is the signature of $G$.}
% with $n_+$ positive eigenvalues and $n_-$ negative eigenvalues, there are $\pm$-definite integer 
there are $\pm$-definite integer 
matrices $A_\pm$ that satisfy the unimodular congruences  $G\oplus I_{4n_-}\cong A_+\oplus -I_{n_-}$ and $G\oplus-I_{4n_+}\cong A_-\oplus I_{n_+}$. %
\footnote{That is, $G$ can be changed to, say, a positive-definite matrix via (at most) $4n_-=2(n-\sigma(G))$ positive kinking moves, a congruence, and then $n_-$ negative unkinking moves.}
\end{T:MainMatrix}

%That is, $G$ can be changed to, say, a positive-definite matrix via (at most) $4n_-=2(n-\sigma(G))$ positive kinking moves, a unimodular congruence, and then $n_-$ negative unkinking moves.

\begin{T:MainKnotLink}
Given a link $L\subset S^3$ with nullity 0 and a Goeritz matrix $G$ for any spanning surface for $L$, $G$ is kink-equivalent to a positive-definite matrix and to a negative-definite matrix. In particular, this is true for every {\it knot} in $S^3$.
%Moreover, given any Goeritz matrix $G$ for any spanning surface for any link in $S^3$, $G$ is  kink-equivalent to a negative-semidefinite matrix and to a positive-semidefinite matrix. 
\end{T:MainKnotLink}

Thus, each knot in $S^3$ is ``alternating up to fake unkinking." We interpret Theorem \ref{T:MainMatrix} in other contexts, too.  For example:

\begin{T:Main4Manifold1}
Given any 4-manifold $M$ where $H_2(M)$ has rank $n$ and nonsingular intersection pairing, $\cdot$, write $n_\pm=\frac{1}{2}(n\pm\sigma(M))$. There are $\pm$-definite 4-manifolds $M_\pm$ that yield the following isomorphisms of intersection pairings on blow-ups: \footnote{These are isomorphisms of second homology groups that respect intersection numbers.}
\begin{align*}
(H_2(M\underset{i=1}{\overset{4n_+}{\#}}%2(n\pm\sigma(M))
\overline{\CP^2}),\cdot)&\cong(H_2(M_-\underset{i=1}{\overset{%\frac{1}{2}(n\pm\sigma(M))
n_+}{\#}}\CP^2),\cdot)\text{ and }\\
(H_2(M\underset{i=1}{\overset{4n_-%2(n\pm\sigma(M))
}{\#}}{\CP^2}),\cdot)&\cong(H_2(M_+\underset{i=1}{\overset{%\frac{1}{2}(n\pm\sigma(M))
n_-}{\#}}\overline{\CP^2}),\cdot).%
\end{align*}
This is also true if one removes ``nonsingular" and replaces ``definite" with ``semidefinite."
\end{T:Main4Manifold1}

In particular:

\begin{T:Main4Manifold2}
Every simply connected, closed, topological 4-manifold $M$ with nonsingular intersection pairing $Q_M$ has a positive blow-up that is homeomorphic to a negative blow-up of a positive-definite, simply connected, closed, topological 4-manifold. 
\end{T:Main4Manifold2}

Theorem \ref{T:MainMatrix} also has a direct implication for quadratic forms, but only those whose cross-terms all have even coefficients.  See Theorem \ref{T:MainQuadratic}.  This motivates an inquiry quite distinct from our original setting: can we extend Theorem \ref{T:MainMatrix} to matrices with half-integers off the diagonal and thus remove the cross-term condition from Theorem \ref{T:MainQuadratic}? Indeed, we prove more generally that Theorem \ref{T:MainMatrix} extends to the case of rational matrices, at the cost of increasing the bound on the number of stabilizations required from $4n$ to $5n$. See Theorem \ref{T:RationalExtension}. 
Writing the quadratic form $q_0:x\mapsto x^2$ thus leads to the following theorem.

\begin{T:MainQuadratic2}
Let $q:\Z^n\to \Q$ be a nonsingular quadratic form, and write $n_\pm=\frac{1}{2}(n\pm\sigma(q))$. There are $\pm$-definite quadratic forms $q_\pm$ that satisfy the unimodular congruences $q\oplus (q_0)^{  5n_-}\cong q_+\oplus (-q_0)^{n_-}$ and $q\oplus (-q_0)^{  5n_+}\cong q_-\oplus (q_0)^{n_+}$.
\end{T:MainQuadratic2}

A brief outline: \textsection\ref{S:Back} gives background, \textsection\ref{S:Main} explores Question \ref{Q:2} and proves Theorem \ref{T:MainMatrix}, but not before entertaining a brief diversion (see Question \ref{Q:CCT} below), \textsection\ref{S:Final} discusses implications and further questions, and \textsection\ref{S:Quadratic} pursues the inquiry described in the previous paragraph.  

\begin{question}\label{Q:CCT}
Given a positive-definite matrix $G_+\in\Z^{n\times n}$, must there be a matrix $C\in \Z^{n\times m}$ (for some $m\geq n$) such that $CC^T=G_+$?
\end{question}

In \textsection\ref{S:CCT}, we explain the pertinence of Question \ref{Q:CCT} to Question \ref{Q:2}.  Even though the answer Question \ref{Q:CCT} turns out to be no (Theorem \ref{thm: counterexample matrix} gives a counterexample), the diversion yields:

\begin{C:square}
Every matrix of the form $G=I+CC^T$ for a symmetric integer matrix $C$ is kink-equivalent to $-G$. In particular, this holds for every matrix of the form $\bbme{n^2+1}$, $n\in\Z$.
\end{C:square}

%Question \ref{Q:CCT} is also interesting over $\Q$, but is trivial over $\R$: given a positive-definite matrix $G_+\in\R^{n\times n}$, orthogonally diagonalizing $G_+=PDP^{-1}$ and taking $C=P\sqrt{D}P^{-1}$ gives $CC^T=G_+$. Working over $\Z$ (or $\Q$), however, we must allow $C$ to be non-square: for example, $[2]=CC^T$ for $C=\bbm 1&1 \ebm$ but not for any $C\in\Q^{1\times 1}$.  

\section{Background}\label{S:Back}

We work in the piecewise-linear category. Except where stated otherwise, we assume that links are in $S^3$ and that their diagrams are on $S^2$ and connected. {\it By ``congruence," we always mean ``unimodular congruence."}
 
\subsection{Linear algebra and number theory}

Here we introduce some well{-}known results from linear algebra and number theory that are used in the paper.

%\begin{theorem}[Spectral theorem]\label{T:Spec}
%A square real matrix is orthogonally diagonalizable over $\R$ if and only if it is symmetric.  In particular, every symmetric real $n\times n$ matrix has $n$ {\it real} eigenvalues, counted with multiplicity. 
%\end{theorem}

\begin{theorem}[Sylvester's inertia theorem]\label{T:Syl}
For a symmetric real matrix $A$ and invertible matrix $P$, ${P^TAP}$ has the same number of positive, negative, and zero eigenvalues as $A$. 
\end{theorem}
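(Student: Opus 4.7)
The plan is to prove Sylvester's inertia theorem via the standard subspace characterization of the positive and negative indices. Write $n_+(A)$, $n_-(A)$, $n_0(A)$ for the number of positive, negative, and zero eigenvalues of a real symmetric matrix $A$. The key reformulation is:
$$n_+(A)=\max\{\dim V : V\subseteq\R^n \text{ is a subspace with } v^TAv>0\text{ for all nonzero }v\in V\},$$
and analogously for $n_-(A)$. I would first verify this reformulation using the spectral theorem: orthogonally diagonalize $A=QDQ^T$ and let $V_+$ be the span of eigenvectors with positive eigenvalues; then $v^TAv>0$ on $V_+\setminus\{0\}$ gives $\geq$, while any subspace of dimension larger than $n_+(A)$ must intersect the span of the non-positive-eigenvalue eigenvectors nontrivially, giving $\leq$.

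Next, I would handle the nullity $n_0$ directly: since $P$ is invertible, $\mathrm{rank}(P^TAP)=\mathrm{rank}(A)$, so $n_0(P^TAP)=n-\mathrm{rank}(P^TAP)=n-\mathrm{rank}(A)=n_0(A)$.

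Then I would prove $n_+(P^TAP)=n_+(A)$ as follows. Given a subspace $V$ with $v^TAv>0$ for all nonzero $v\in V$, the subspace $W=P^{-1}V$ has $\dim W=\dim V$ (since $P$ is invertible), and for nonzero $w\in W$, writing $v=Pw\neq 0$, we have $w^T(P^TAP)w=v^TAv>0$. This shows $n_+(P^TAP)\geq n_+(A)$. Applying the same argument to $B=P^TAP$ and $P^{-1}$ yields $n_+(A)=n_+((P^{-1})^TB P^{-1})\geq n_+(B)=n_+(P^TAP)$, giving equality. The argument for $n_-$ is identical, using $<0$ in place of $>0$.

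The main conceptual step is the subspace characterization of $n_\pm$, but this is a well-known consequence of the spectral theorem and requires no substantial work; the rest is a direct change of variables. No step should present a serious obstacle.
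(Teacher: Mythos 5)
Your proof is correct: the subspace (maximal-dimension) characterization of $n_\pm$ is established properly from the spectral theorem, the change of variables $W=P^{-1}V$ is valid since $P$ is invertible, and the nullity count via rank works because $P^TAP$ is again symmetric. Note that the paper states Sylvester's inertia theorem only as classical background and gives no proof of its own, so there is nothing to compare against; your argument is the standard complete proof of this fact.
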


A vector $\mathbf{x}\in\Z^n$ is called {\bf primitive} if the greatest common divisor of its entries is one.  {An integer matrix $A$ is {\bf unimodular} if it is invertible over $\Z$; two equivalent conditions are that the columns of $A$ form a $\Z$-basis for $\Z^n$, or that $\det(A)=\pm1$.}

\begin{fact}[Consequence of Rado's Lemma \cite{rado}]%[Every primitive vector extends to a basis]
\label{F:Primitive}
Any primitive vector $\mathbf{x}\in\Z^n$ extends to a basis for $\Z^n$, i.e. there is a unimodular matrix whose first column is $\mathbf{x}$.
\end{fact}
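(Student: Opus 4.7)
The plan is to prove this by exhibiting an explicit sequence of unimodular row operations that reduces $\mathbf{x}$ to the standard basis vector $\mathbf{e}_1=(1,0,\ldots,0)^T$. If $U\mathbf{x}=\mathbf{e}_1$ with $U\in\mathrm{GL}_n(\Z)$, then $U^{-1}$ is a unimodular matrix whose first column is $\mathbf{x}$, as desired.

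First I would handle trivial cases: if $n=1$ then $\mathbf{x}=(\pm 1)$ already, and if all but one entry of $\mathbf{x}$ is zero then the remaining entry must be $\pm 1$, so a permutation together with a sign change finishes the argument. Otherwise, the main engine is the following Bezout step: given any two coordinates $x_i,x_j$ not both zero, let $d=\gcd(x_i,x_j)$ and choose $p,q\in\Z$ with $px_i+qx_j=d$. Then the $2\times 2$ integer matrix
\[
\begin{pmatrix} p & q \\ -x_j/d & x_i/d \end{pmatrix}
\]
has determinant $(px_i+qx_j)/d=1$, so embedding it as a $2\times 2$ block (in the $i,j$ rows and columns) of an otherwise identity $n\times n$ matrix produces a unimodular $E_{ij}\in\mathrm{GL}_n(\Z)$ whose action on $\mathbf{x}$ replaces $(x_i,x_j)$ by $(d,0)$ and leaves all other entries fixed.

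Next I would iterate this step, for instance by always combining the first nonzero coordinate with the next nonzero coordinate. Each application strictly decreases either the number of nonzero entries or the first entry's absolute value, so the process terminates; since primitivity is preserved throughout (the gcd of the entries of $U\mathbf{x}$ equals the gcd of the entries of $\mathbf{x}$ for any $U\in\mathrm{GL}_n(\Z)$), it must terminate with a vector whose single nonzero entry is $\pm 1$. A final permutation matrix and possible sign flip then produce $\mathbf{e}_1$, and the product of all these unimodular matrices is the desired $U$.

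The argument has no real obstacle beyond bookkeeping; the one subtle point to get right is that the $2\times 2$ Bezout block genuinely has determinant $+1$ (and hence integer inverse), which relies on dividing by $d$ before inserting the entries, as written above. Alternatively, one can phrase the entire argument as induction on $n$ by using the Bezout step once to clear the second coordinate, then invoking the induction hypothesis on the $(n-1)$-dimensional primitive vector formed from the first and last $n-2$ coordinates; I would probably present the iterative version since it parallels the Euclidean algorithm and needs no separate inductive setup.
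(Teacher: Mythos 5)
Your argument is correct, and it is worth noting that the paper does not actually prove this fact at all: it is stated as a consequence of Rado's lemma and dispatched with a citation to \cite{rado}. So your route is genuinely different simply by being a self-contained proof. What you do is the standard Euclidean-algorithm argument: the $2\times2$ Bezout block $\bigl[\begin{smallmatrix} p & q \\ -x_j/d & x_i/d \end{smallmatrix}\bigr]$ does have integer entries (since $d\mid x_i,x_j$) and determinant $p(x_i/d)+q(x_j/d)=1$, it sends $(x_i,x_j)$ to $(d,0)$, and each such step strictly decreases the number of nonzero coordinates, so termination is immediate --- in fact your stated measure (``number of nonzero entries or the first entry's absolute value'') is more than you need, since the count of nonzero entries alone already drops by one each time. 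Preservation of the gcd under $\mathrm{GL}_n(\Z)$ then forces the surviving entry to be $\pm1$, and inverting the accumulated product $U$ gives the unimodular matrix with first column $\mathbf{x}$, exactly as claimed. Compared with the paper's appeal to Rado's basis theorem as a black box, your proof has the advantage of being explicitly algorithmic --- it produces the unimodular matrix as a product of elementary $2\times2$ blocks, a permutation, and a sign change --- which is in the spirit of the paper's own emphasis on keeping Step 1 of the main algorithm constructive; the citation, on the other hand, is shorter and situates the fact within the classical structure theory of finitely generated abelian groups.
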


%\begin{proof}
%Let $\mathbf{x}\in\Z^n$ be primitive. We will find a unimodular matrix with $\mathbf{x}$ as its first column; its columns will then give the basis we seek. We use a variant of the Euclidean algorithm.

%From $\mathbf{x}=\mathbf{x}_0$, take an entry of minimal nonzero absolute value, add an integer multiple of it to each other entry to get {a number } of minimum absolute value, and denote the resulting vector $\mathbf{x}_1$.  Repeat until the process terminates, $\mathbf{x}_0\to\mathbf{x}_1\to\mathbf{x}_2\to\cdots\to\mathbf{x}_j$. Note that $\mathbf{x}_j$ will be a standard basis vector {$\mathbf{e}_s$ } for $\Z^n$ because $\mathbf{x}$ is primitive.  Note also that for each step $\mathbf{x}_i\to\mathbf{x}_{i+1}$, we have $ \mathbf{x}_{i+1}=E_i\mathbf{x}_i$ for some unimodular matrix $E_i$. Thus, taking $P$ to be the permutation matrix corresponding to the permutation $\ps{1\ s}$, we have $PE_{j-1}\cdots E_1\mathbf{x}=\mathbf{e}_1$. Let $E=PE_{j-1}\cdots E_1$. Because $E$ is a product of unimodular matrices, it is unimodular, as is $E^{-1}$. Moreover, $E^{-1}\mathbf{e}_1=\mathbf{x}$ is the first column of $E^{-1}$. The columns of $E^{-1}$ form a basis for $\Z^n$ extending $\cs{\mathbf{x}}$.
%\end{proof}

{The bounds in Theorem~\ref{T:MainMatrix} will come from the following classical result:}

\begin{theorem}[Lagrange's Four Squares Theorem]\label{thm: foursq}
Let $x\geq0$ be an integer. Then there exist integers $a,b,c,d$ such that $x=a^2+b^2+c^2+d^2$. 
\end{theorem}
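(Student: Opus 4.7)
The plan is to prove Lagrange's theorem for primes and then bootstrap to all nonnegative integers via multiplicativity. The key algebraic tool is \emph{Euler's four-square identity}:
\[
(a_1^2+a_2^2+a_3^2+a_4^2)(b_1^2+b_2^2+b_3^2+b_4^2) = c_1^2+c_2^2+c_3^2+c_4^2,
\]
where each $c_k$ is a specific bilinear combination of the $a_i,b_j$ (for instance $c_1 = a_1b_1+a_2b_2+a_3b_3+a_4b_4$). Together with the trivial representations $0=0^2+0^2+0^2+0^2$, $1=1^2+0^2+0^2+0^2$, and $2=1^2+1^2+0^2+0^2$, and the fact that every positive integer factors into primes, this identity reduces the theorem to showing that every odd prime $p$ is a sum of four squares.

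For an odd prime $p$, the first step is to produce some positive multiple $mp$ with $1\le m<p$ that is a sum of four squares. I would use a pigeonhole count on residues modulo $p$: the sets $\{x^2\bmod p : 0\le x\le (p-1)/2\}$ and $\{-1-y^2\bmod p : 0\le y\le (p-1)/2\}$ each have $(p+1)/2$ elements, so they must overlap, giving $x,y$ with $x^2+y^2+1\equiv 0\pmod p$. Since $x^2+y^2+1<p^2$, we obtain $mp=x^2+y^2+1^2+0^2$ for some $1\le m<p$.

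The core of the argument is a Fermat-style descent: given $1<m\le p-1$ and a representation $mp=a_1^2+a_2^2+a_3^2+a_4^2$, I would construct $m'<m$ with $m'p$ also a sum of four squares. Choose $b_i\equiv a_i\pmod m$ with $|b_i|\le m/2$; then $b_1^2+b_2^2+b_3^2+b_4^2\equiv 0\pmod m$, equal to $mm'$ for some $0\le m'\le m$. Rule out $m'=0$ (which would force $m\mid a_i$ for all $i$, hence $m\mid p$, impossible) and $m'=m$ (which only arises when $m$ is even and every $|b_i|=m/2$; a parity adjustment on the $a_i$'s handles this by halving $m$ directly). In the generic case $1\le m'<m$, apply Euler's identity to $(mp)(mm')$ to obtain $m^2m'p=c_1^2+c_2^2+c_3^2+c_4^2$; a short bilinear calculation using $b_i\equiv a_i\pmod m$ shows $m\mid c_k$ for each $k$, and dividing through gives $m'p$ as a sum of four squares. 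Iteration forces $m=1$, which is what we want.

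The main obstacle is the descent step, particularly the edge case $m'=m$. Checking $m\mid c_k$ in the generic case is a routine congruence computation, but when $m'=m$ one must argue separately: the $a_i$'s can be re-paired so that an even number of them are odd, which makes $\tfrac{1}{2}(a_1\pm a_2)$ and $\tfrac{1}{2}(a_3\pm a_4)$ integers and gives a representation of $(m/2)p$ as a sum of four squares. Once that wrinkle is handled, the rest is bookkeeping.
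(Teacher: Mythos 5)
Your outline is the standard classical proof of Lagrange's theorem, and it is correct: Euler's four-square identity (your stated $c_1=a_1b_1+a_2b_2+a_3b_3+a_4b_4$ is one of the valid sign conventions) reduces the problem to odd primes, the pigeonhole count on the $(p+1)/2$ residues $x^2$ and the $(p+1)/2$ residues $-1-y^2$ produces $mp=x^2+y^2+1$ with $1\le m<p$, and the Fermat descent with $|b_i|\le m/2$ works as you describe, including the two edge cases: $m'=0$ forces $m\mid a_i$ for all $i$ and hence $m\mid p$, a contradiction for $1<m<p$, while $m'=m$ can only occur for $m$ even and is resolved by re-pairing the $a_i$ by parity so that $\tfrac{1}{2}(a_1\pm a_2)$, $\tfrac{1}{2}(a_3\pm a_4)$ are integers representing $\tfrac{m}{2}p$. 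Note that the paper does not prove this statement at all; it quotes it as classical background (pointing to the literature, e.g.\ the cited algorithmic treatment of Pollack--Trevi\~no, only for effective versions used in Step 2 of the main algorithm), so there is no in-paper argument to compare against --- your proof supplies exactly the standard argument the paper takes for granted.
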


\subsection{Checkerboard surfaces for knots and links in $S^3$}

\begin{figure}
\begin{center}
%ToggleBW%\includegraphics[width=.3\textwidth]{figures/BW/ChessboardSurfaceA.pdf}\hspace{.15\textwidth}\includegraphics[width=.3\textwidth]{figures/BW/ChessboardSurfaceB.pdf}
%ToggleColor%
\includegraphics[width=.3\textwidth]{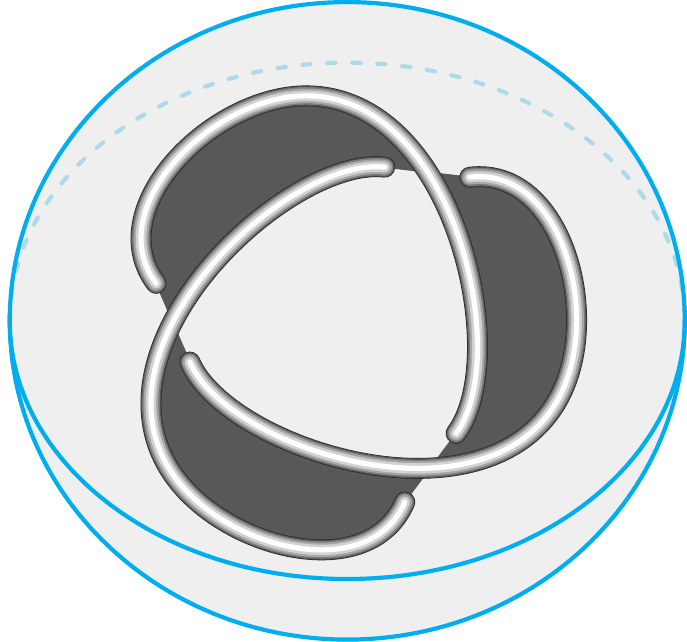}\hspace{.15\textwidth}\includegraphics[width=.3\textwidth]{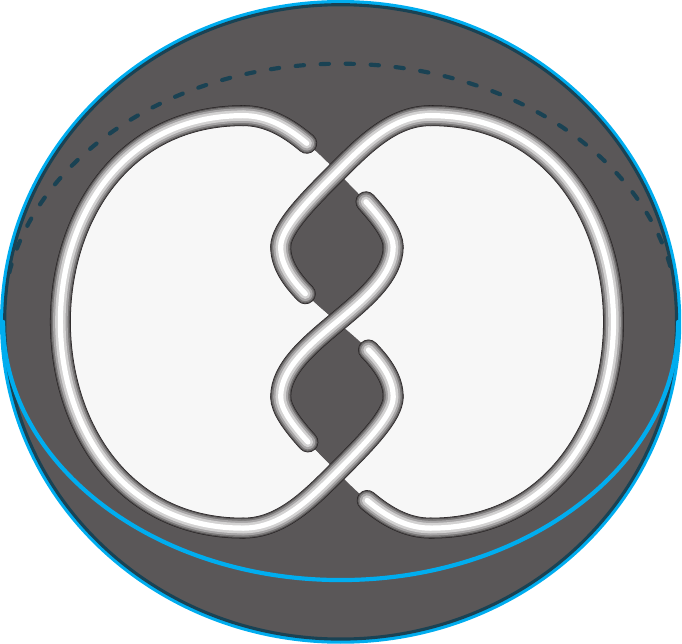}
\caption{Checkerboard surfaces for the right hand trefoil.}
\label{Fi:Checkerboards}
\end{center}
\end{figure}

Given a diagram $D$ of a link $L$, one may shade the complementary regions of $D$ in $S^2$ black and white in checkerboard fashion, so that regions of the same shade meet only at crossing points.
See Figure \ref{Fi:Checkerboards}.
One may then construct spanning surfaces $B$ and $W$ for $L$ such that $B$ projects to the black regions, $W$ projects to the white, and $B$ and $W$ intersect in {vertical arcs} which project to the crossings of $D$.  Call the surfaces $B$ and $W$ the {\bf checkerboard surfaces} from $D$. 
These are {\bf spanning surfaces} for $L$: compact connected surfaces in $S^3$ with boundary equal to $L$.  (Not every spanning surface can be realized as a checkerboard surface, though.)

More generally, given a {state} $x$ of $D$ (constructed by smoothing each crossing in one of two ways, $\raisebox{-.02in}{\includegraphics[width=.125in]{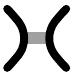}}
\overset{\color{Gray}{_{{A}}}\color{black}}{\longleftarrow}\raisebox{-.02in}{\includegraphics[width=.125in]{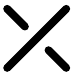}}
\overset{_{{B}}}{\longrightarrow}\raisebox{-.02in}{\includegraphics[width=.125in]{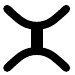}}$
), one can construct a spanning surface $F_x$ for $L$, called a {\bf state surface}, by attaching a disk to each state circle (typically with all disks' interiors {on the same side} of $S^2$) and attaching a half-twisted band at each crossing.  
Every state surface is isotopic to a checkerboard surface of some diagram \cite{thesis,essence}.  

The rank $\beta_1(F)$ of the first homology group of a spanning surface $F$ counts the number of ``holes'' in $F$. When $F$ is connected, $\beta_1(F)=1-\chi(F)$ counts the number of cuts along disjoint, properly embedded arcs required to reduce $F$ to a disk.

\subsection{The Gordon-Litherland pairing and Greene's theorem}\label{S:Goeritz}

\begin{figure}
\begin{center}
%ToggleBW%\includegraphics[width=.25\textwidth]{figures/BW/ThickenedSurfaceC}\caption{A curve ${\gamma}$ on ${F}$, with ${\wt{\gamma}=p^{-1}(\gamma)}$ on ${\wt{F}}$.}
%ToggleColor%
\includegraphics[width=.25\textwidth]{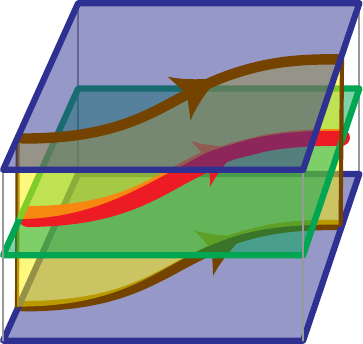}\caption{A curve $\red{\gamma}$ on $\FG{F}$, with $\brown{\wt{\gamma}=p^{-1}(\gamma)}$ on $\Navy{\wt{F}}$.}
\label{Fi:GL}
\end{center}
\end{figure}

Given a surface $F$ spanning a link $L$, take a closed regular neighborhood $\nu F$ in the link exterior $S^3\cut\nu L$
with projection ${p}:{\nu}F\to F$
such that $p^{-1}(\partial F)=\nu F\cap\partial\nu L$; see Figure \ref{Fi:GL}.  
Denote the {frontier} $\wt{F}=\partial\nu F{\cut}\partial\nu L$ \footnote{Thus, the restriction $p:\wt{F}\to F$ is a 2:1 covering map, $\wt{F}$ is orientable, and $\wt{F}$ is connected if and only if $F$ is connected and nonorientable.} and
transfer map $\tau:H_1(F)\to H_1(\widetilde{F})$. %
\footnote{Given any %primitive 
$g\in H_1(F)$, choose an oriented multicurve $\gamma\subset\text{int}(F)$ representing $g$, 
denote $\widetilde{\gamma}=\partial({p}^{-1}(\gamma))$, and %
orient $\widetilde{\gamma}$ following $\gamma$; then, $\tau(g)=[\widetilde{\gamma}]$.} The {\bf Gordon-Litherland pairing} \cite{gl}
\[\langle\cdot,\cdot\rangle:H_1(F)\times H_1(F)\to\Z\]
 is the symmetric, bilinear mapping 
given by the linking number
\[\langle a,b\rangle=\text{lk}(a,\tau(b)).\]
Any projective homology class $g=[\gamma]\in H_1(F)/\pm$ has a well-defined {self-pairing} $\lb g \rb=\langle g,g\rangle$; the {framing} of $\gamma$ in $F$ is given by $\frac{1}{2}\lb g \rb$. When $L$ is a knot, the framing of (a pushoff of) $L$ in $F$ is called the {\bf slope} of $F$, written $s(F)$.  When $L$ has multiple components, the slope $s(F)$ is the sum of the components' framings in $F$.

Given an ordered basis $\mathcal{B}=(a_1,\hdots,a_m)$ for $H_1(F)$, the {\bf Goeritz matrix} $G=(x_{ij})\in\Z^{m\times m}$ given by $x_{ij}=\langle a_i,a_j\rangle$ represents $\langle\cdot,\cdot\rangle$ with respect to $\mathcal{B}$. %
\footnote{That is, any $\displaystyle{y=\sum_{i=1}^my_ia_i}$ and $\displaystyle{z=\sum_{i=1}^mz_ia_i}$ satisfy
$\displaystyle{\langle y,z\rangle=\begin{bmatrix}y_1&\cdots&y_m\end{bmatrix}G\begin{bmatrix}
z_1&\cdots&
z_m
\end{bmatrix}^T.}$}
When $F$ is, say, the shaded checkerboard surface from a diagram $D$, the boundaries of (all but one of) the unshaded regions $S^2\cut D$, each oriented counterclockwise, provide a convenient basis $\mathcal{B}$ for $H_1(F)$.  Figure \ref{Fi:Goeritz} shows an illustrative example.
The signature of $G$ is called the {signature of $F$} and is denoted $\sigma(F)$.  Gordon-Litherland show that the quantity $\sigma(F)-\frac{1}{2}s(F)$ is independent of $F$, and in fact equals the Murasugi invariant $\xi(L)$, which is the average signature of $L$ across all orientations. 

They also show that $\sigma(F)$ is the signature of the 4-manifold obtained by pushing the interior of $F$ into the interior of the 4-ball $B^4$, while fixing $\partial F$ in $\partial B^4=S^3$, and taking the double-branched cover of $B^4$ along this surface. In particular, when $L$ is a knot, $\xi(L)$ is the signature of $L$ and of the 4-manifold obtained as a double-branched cover of $B^4$ along any perturbed Seifert surface.

A spanning surface $F$ is
{\bf positive-definite} if $\lla g,g\rra>0$ for all nonzero $g\in H_1(F)$ \cite{greene}.
Equivalently, $F$ is positive-definite if and only if
$\sigma(F)=\beta_1(F)$.
{\bf Negative-definite} surfaces are defined analogously. Greene proves:

\begin{figure}[t]
\begin{center}
\labellist \small
 \pinlabel {${R_1}$} [c] at 220 410
 \pinlabel {${R_2}$} [c] at 360 230
 \pinlabel {${R_3}$} [c] at 160 160
%ToggleBW% \tiny \pinlabel {$\white{{a_1}}$} [c] at 80 400 \pinlabel {$\white{{a_2}}$} [c] at 465 145 \pinlabel {$\white{{a_3}}$} [c] at 70 70 \pinlabel {$\boldsymbol{+}$} [c] at 30 190 \pinlabel {$\boldsymbol{+}$} [c] at 220 50 \pinlabel {$\boldsymbol{+}$} [c] at 240 200 \pinlabel {$\boldsymbol{+}$} [c] at 470 240 \pinlabel {$\boldsymbol{+}$} [c] at 302 292 \pinlabel {$\boldsymbol{+}$} [c] at 200 475 \pinlabel {$\boldsymbol{+}$} [c] at 380 110 \endlabellist \includegraphics[height=1.5in]{figures/BW/Knot_7_6}\raisebox{.9in}{$\leadsto\begin{bmatrix}2&-1&0\\-1&4&-1\\0&-1&3\\ \end{bmatrix}$}
%ToggleColor%
 \tiny \pinlabel {$\red{{a_1}}$} [c] at 80 400 \pinlabel {$\Yel{{a_2}}$} [c] at 465 145 \pinlabel {$\Navy{{a_3}}$} [c] at 70 70 \pinlabel {$\boldsymbol{+}$} [c] at 30 190 \pinlabel {$\boldsymbol{+}$} [c] at 220 50 \pinlabel {$\boldsymbol{+}$} [c] at 240 200 \pinlabel {$\boldsymbol{+}$} [c] at 470 240 \pinlabel {$\boldsymbol{+}$} [c] at 302 292 \pinlabel {$\boldsymbol{+}$} [c] at 200 475 \pinlabel {$\boldsymbol{+}$} [c] at 380 110 \endlabellist\includegraphics[height=1.5in]{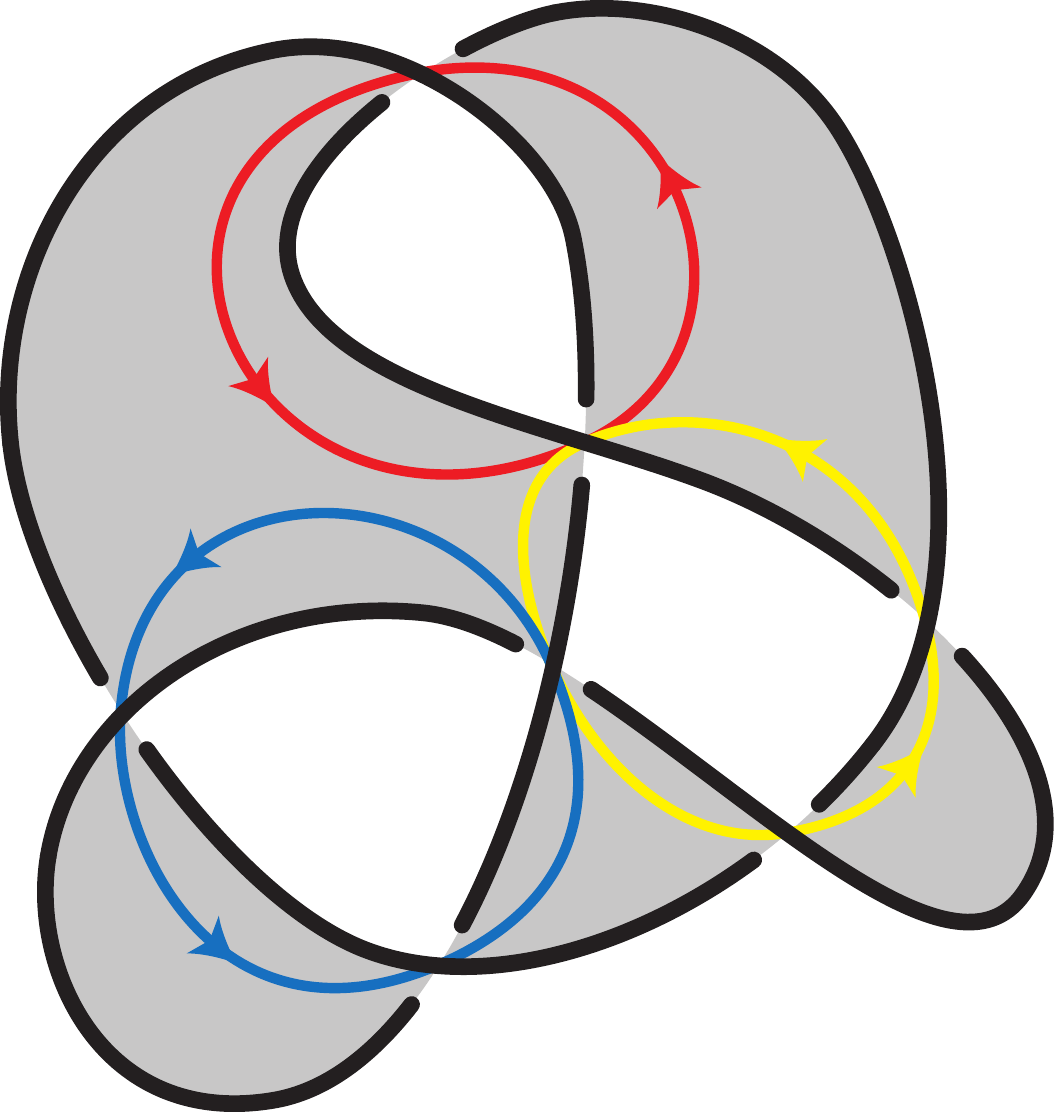}\raisebox{.9in}{$\leadsto\begin{bmatrix}\red{2}&\Orange{-1}&\violet{0}\\\Orange{-1}&\Yel{4}&\FG{-1}\\\violet{0}&\FG{-1}&\Navy{3}\end{bmatrix}$}
\caption{A checkerboard surface and its Goeritz matrix.}
\label{Fi:Goeritz}
\end{center}
\end{figure}

\begin{theorem}[\cite{greene}]\label{T:Greene}
A link $L\subset S^3$ is alternating if and only if it has spanning surfaces $F_+$ and $F_-$ whose respective Goeritz matrices are positive- and negative-definite.  Moreover, given such $F_\pm$, $L$ has a diagram whose checkerboard surfaces are isotopic to $F_\pm$.
\end{theorem}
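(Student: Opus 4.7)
The forward direction is relatively concrete. Starting from an alternating diagram $D$ of $L$, I would examine the two checkerboard surfaces $B$ and $W$ using the convention in which each crossing $c$ gets a sign $\mu(c)\in\{\pm1\}$ determined by its interaction with the shaded region. The defining property of an alternating diagram is precisely that all crossings incident to the shaded surface share the same $\mu$-value (with opposite values realized by the two shadings). With the standard basis given by bounded unshaded regions, the off-diagonal entries of the Goeritz matrix are nonpositive (for one shading) while the diagonal entries dominate by rows, so the matrix is diagonally dominant with a uniform sign pattern. I would then invoke a Gershgorin/diagonal-dominance argument, or an explicit quadratic-form computation using the state-circle basis, to conclude positive-definiteness for one checkerboard surface and negative-definiteness for the other.

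The converse is the deep half. My plan is to leverage the 4-manifold interpretation of the Gordon-Litherland pairing already recalled in \textsection\ref{S:Goeritz}: pushing $F_\pm$ into $B^4$ and taking the double cover of $B^4$ branched over the pushed surface produces smooth $4$-manifolds $X_\pm$ with $\partial X_\pm=\Sigma(L)$, the double-branched cover of $S^3$ along $L$, and with intersection pairing represented by the Goeritz matrix of $F_\pm$. By hypothesis, $X_+$ is positive-definite and $X_-$ is negative-definite. Gluing $X_+$ to the orientation-reversal of $X_-$ along $\Sigma(L)$ yields a closed, smooth, oriented, positive-definite 4-manifold, so Donaldson's diagonalization theorem forces its intersection pairing to be the standard form $(\Z^N,\mathrm{Id})$. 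This embeds the Goeritz lattice of $F_+$ as the orthogonal complement of the Goeritz lattice of $-F_-$ inside a standard diagonal lattice.

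The heart of the proof is then to turn this lattice-theoretic conclusion into combinatorial/planar data producing an alternating diagram. I would follow Greene's program: translating the embedding into a ``changemaker"-type description that identifies the Goeritz lattice of $F_+$ with the cut (or flow) lattice of some connected planar graph $\Gamma$. The correspondence between plane graphs and alternating diagrams (via Tait's medial construction) then produces a diagram $D'$ of $L$ whose checkerboard surfaces have $\Gamma$ and its planar dual as their underlying graphs; matching the resulting Goeritz forms on the nose with those of $F_\pm$, and using uniqueness of spanning surfaces realizing a given pairing up to isotopy in the alternating setting, yields the ``moreover" clause that the checkerboards of $D'$ are isotopic to $F_\pm$.

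The main obstacle is the combinatorial step of passing from an abstract lattice embedding into $\Z^N$ to a realization by a planar graph, i.e. showing that the lattice produced by Donaldson must be of a very restricted shape (a ``Heegaard-Floer $L$-space" or ``changemaker" lattice). This is where the argument genuinely requires either $d$-invariant obstructions from Heegaard-Floer homology, or Greene's refined lattice analysis; unlike the forward direction, there is no elementary substitute. Once this structural theorem about the embedding is in hand, the reconstruction of the alternating diagram is a mechanical application of planar graph duality.
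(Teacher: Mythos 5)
You should note at the outset that the paper does not prove Theorem \ref{T:Greene} at all: it is quoted from Greene \cite{greene}, so your proposal has to be measured against Greene's published argument rather than anything in this paper. Your forward direction is essentially the standard one (the Goeritz matrix of the shaded checkerboard surface of a connected alternating diagram is a ``grounded'' graph Laplacian, with the opposite sign for the other shading), but Gershgorin/diagonal dominance alone only yields semidefiniteness; to get strict definiteness you need the connectivity argument (each region is joined through the graph of regions to the deleted region $R_0$, i.e.\ irreducible diagonal dominance), and some care with non-reduced diagrams. That part is repairable.

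The converse, however, has a genuine gap exactly where you place the ``heart of the proof.'' After gluing the pushed-in double branched covers $X_+\cup_{\Sigma(L)}\overline{X_-}$ and applying Donaldson, all you retain is an embedding of the lattice $G_{F_+}\oplus -G_{F_-}$ into a diagonal lattice $\Z^N$. The next step you propose --- ``a changemaker-type description identifying the Goeritz lattice with the cut (or flow) lattice of a connected planar graph'' --- is not a known or mechanical step, and it is not Greene's program for this theorem: changemaker lattices arise from surgery presentations and sharp 4-manifolds, and there is no theorem asserting that a definite lattice sitting as the orthogonal complement of another Goeritz lattice in $\Z^N$ must be the lattice of a plane graph. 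Worse, once you have passed to the 4-manifold/lattice level you have discarded the essential hypothesis that both definite forms are Gordon--Litherland forms of spanning surfaces of the same link: quasi-alternating links, which are generally not alternating, also have double branched covers bounding both positive- and negative-definite 4-manifolds with the corresponding embeddings into $\Z^N$, so no argument using only this coarse data can conclude that $L$ is alternating. Greene's actual proof is elementary and stays in $S^3$: he puts $F_+$ and $F_-$ in minimal position, uses definiteness of the Gordon--Litherland pairings to eliminate closed curves of intersection and to show that the arcs of $F_+\cap F_-$ cut both surfaces into disks and cut $S^3\setminus(F_+\cup F_-)$ into balls, and then reads the alternating diagram, together with the ``moreover'' clause, directly off this configuration --- no Donaldson diagonalization, Heegaard Floer homology, or $d$-invariants appear, contrary to your claim that no elementary route exists. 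Finally, your closing appeal to ``uniqueness of spanning surfaces realizing a given pairing up to isotopy in the alternating setting'' is itself an unproved and delicate assertion (compare the classification of spanning surfaces for alternating links in \cite{ak}); in Greene's argument the isotopy statement is an output of the construction, not an input.
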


\begin{figure}
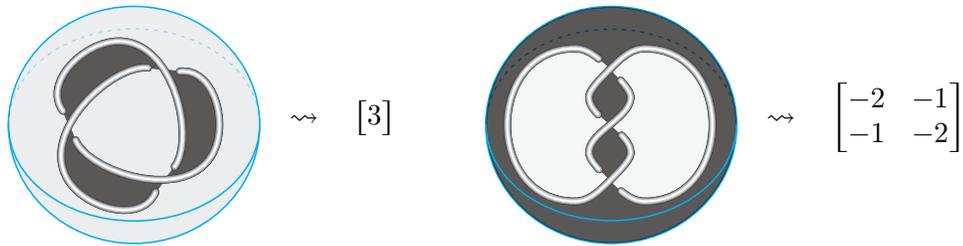

\centering
%ToggleBW%\hfill\raisebox{-0.5\height}{\includegraphics[height=1.25in]{figures/BW/ChessboardSurfaceA.pdf}}\quad$\leadsto\quad\bbme{3}$\hfill\raisebox{-0.5\height}{\includegraphics[height=1.25in]{figures/BW/ChessboardSurfaceB.pdf}}\quad$\leadsto\quad\bbme{-2&-1\\-1&-2}$\hfill
%ToggleColor%
\hfill\raisebox{-0.5\height}{\includegraphics[height=1.25in]{figures/ChessboardSurfaceA.pdf}}\quad$\leadsto\quad\bbme{3}$\hfill\raisebox{-0.5\height}{\includegraphics[height=1.25in]{figures/ChessboardSurfaceB.pdf}}\quad$\leadsto\quad\bbme{-2&-1\\-1&-2}$\hfill
\caption{Goeritz matrices for the dark (left) and light (right) checkerboard surfaces of a diagram of the right hand trefoil}\label{Fi:TrefGoeritz}
\end{figure}

\subsection{Equivalences}

Traditionally, two symmetric integer matrices are called {\bf $\boldsymbol{S}$-equivalent} if they are related by congruence moves, $G\leftrightarrow PGP^T$ ($P$ unimodular), and tubing and untubing moves,

\[G\leftrightarrow\bbm G&\vb&\0\\ 
\vb^T&0&1\\
\0^T&1&0 \ebm.\]

{\bf $\boldsymbol{S^*}$-equivalence} is defined the same way, except that kinking and unkinking moves are also allowed: 
\[G\leftrightarrow\bbm G&\0\\
0&\pm 1 \ebm{=G\oplus[\pm1]}.\]

$S$- and $S^*$-equivalence find motivation in the following theorems:

\begin{theorem}[\cite{levine}]\label{T:tube}
All Seifert surfaces for a given oriented link $L\subset S^3$ are related by attaching and removing tubes \cite{levine}.%\footnote{Applied to (non-symmetric) Seifert matrices, this leads, among other things, to a simple proof that the single-variable Alexander polynomial of an oriented link is well-defined up to degree shift.}
\end{theorem}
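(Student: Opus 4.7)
The plan is to put the two Seifert surfaces $F_1, F_2$ for $L$ in general position, simplify their intersection by an innermost disk argument in which each simplification realizes a tubing or untubing move, and finish with a handle-theoretic argument on the cobounding 3-manifold.

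First I would isotope $F_1, F_2$ so that $\partial F_1 = \partial F_2 = L$ and their interiors meet transversely, so that $F_1 \cap F_2 = L \sqcup C$ for a disjoint union $C$ of simple closed curves. Locate an innermost circle $c \subset C$ bounding a disk $D$ in one of the surfaces, say $F_1$, with $\mathrm{int}(D) \cap F_2 = \emptyset$, and perform ambient surgery on $F_2$ along $D$: cut an annular neighborhood of $c$ out of $F_2$ and cap off with two parallel copies of $D$. When $c$ is nonseparating in $F_2$ this is literally an untubing move on $F_2$; otherwise it can be read backwards as a tubing on the resulting surface. Either way the new surface $F_2'$ is still a Seifert surface for $L$, and $|F_1 \cap F_2'| < |F_1 \cap F_2|$. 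Iterating reduces to the case $F_1 \cap F_2 = L$.

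With $F_1 \cap F_2 = L$, a small pushoff makes the surfaces disjoint away from $L$, so $F_1 \cup F_2$ cobounds a compact 3-manifold $N \subset S^3$. Choose a Morse function $f : N \to [0,1]$ with $F_1 = f^{-1}(0)$ and $F_2 = f^{-1}(1)$. Passing an index-1 critical point attaches a 1-handle (a tube) to the level surface, an index-2 critical point compresses the level surface along a disk (an untubing), and indices 0 and 3 create or destroy 2-sphere components, which are trivial in $S^3$ since every 2-sphere bounds a ball. Composing these local changes presents $F_2$ as the result of finitely many tube and untube moves applied to $F_1$, up to isotopy.

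The main obstacle is orientation compatibility. Since $F_1$ and $F_2$ are Seifert surfaces, every ambient surgery and every 1-handle attachment must preserve an orientation, and the normal framings of the tubes must agree with the Seifert orientations on the disks they connect. Each intersection circle $c \subset F_1 \cap F_2$ has to be analyzed according to whether the normals of $F_1$ and $F_2$ agree or disagree along $c$, and this determines which side of $F_2$ the surgery occurs on. Fortunately $S^3$ is orientable and Seifert surfaces are two-sided, so a careful bookkeeping of normal directions shows these moves can always be chosen to produce oriented surfaces. A secondary subtlety is that a compression may disconnect $F_2$; this is fixed by one additional tubing that reconnects the pieces, keeping all moves within the allowed class.
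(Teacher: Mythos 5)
Your route (general position, innermost-disk surgery, then a Morse function on the region cobounded by the two surfaces) is the classical cut-and-paste approach and is genuinely different from what the paper does: following Bar-Natan--Fulman--Kauffman, the paper isotopes each Seifert surface into disk-band form, adds tubes where bands cross so that each becomes the algorithmic Seifert surface of a connected diagram of $L$, and then realizes a sequence of Reidemeister moves between the two diagrams by isotopies and tubings. That detour through diagrams exists precisely because the direct argument you sketch is delicate to complete, and as written your proposal has concrete gaps.

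First, the innermost-disk induction. If the innermost circle $c$ (bounding $D\subset F_1$) is inessential in $F_2$, or is essential but cuts off a subsurface of $F_2$ with no other boundary, then surgery along $D$ creates a closed component and/or disconnects the surface, so the result is no longer a Seifert surface. Your proposed fix, ``one additional tubing that reconnects the pieces,'' does not obviously work: the two surgery disks are parallel copies of $D\subset F_1$ pushed to opposite sides of $F_1$, so the natural reconnecting arc crosses $F_1$, and tubing along it reintroduces an intersection circle, destroying the claim $|F_1\cap F_2'|<|F_1\cap F_2|$; an arc avoiding $F_1$ need not be available near the surgery site, so the induction can stall. Second, the Morse-theoretic step. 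The function must be made product-like near $L$ (the region $N$ has a corner along $L$, and as stated $f$ would have to send $L$ to both $0$ and $1$), and index-$0$ and index-$3$ critical points are not disposed of by observing that spheres bound balls: the birth or death of a split closed component is not a tube move. One must either cancel these critical points (possible because $N$ is connected with both horizontal boundary pieces nonempty, but this requires an argument) or show that a birth of a sphere followed by a later $1$-handle is, up to isotopy, a single tubing. Orientation, which you single out as the main obstacle, is actually the easy part---regular level surfaces are two-sided, and compressions and $1$-handle attachments of oriented surfaces inherit orientations; the real work your proposal leaves undone is controlling connectedness and closed components so that every intermediate surface is a Seifert surface and every step is a tube move.
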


\begin{theorem}[\cite{gl}]\label{T:tubekink}
All spanning surfaces for a given link $L\subset S^3$ are related by attaching and removing tubes and crosscaps.
\end{theorem}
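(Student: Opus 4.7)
The plan is a direct general-position argument. I would first isotope the two spanning surfaces $F_0, F_1$ so that their boundaries coincide along $L$ with matching collar neighborhoods, and put them in general position in the interior of the link exterior $S^3 \cut \nu L$. The intersection $F_0 \cap F_1$ is then a properly embedded $1$-submanifold consisting of circles (in the interiors of both surfaces) and arcs with endpoints on $\partial \nu L$.

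Next I would eliminate intersections one at a time. For a circle $c \subset F_0 \cap F_1$ that is innermost on $F_1$---bounding a disk $D \subset F_1$ with $\text{int}(D) \cap F_0 = \emptyset$---I would surger $F_0$ along $D$: remove a small annular neighborhood of $c$ in $F_0$ and cap off with two parallel copies of $D$, producing a new spanning surface for $L$. Depending on whether the two boundary circles of the replaced annulus can be joined by a band compatibly oriented with $F_0$, this surgery is realized by either a tubing or a crosscapping move on $F_0$. An analogous argument using outermost arcs of $F_0 \cap F_1$ on $F_1$ handles arcs of intersection. Induction on $|F_0 \cap F_1|$ reduces to the case $F_0 \cap F_1 = L$. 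Once $F_0$ and $F_1$ meet only along $L$, the union $F_0 \cup F_1$ is a closed surface in $S^3$ bounding $3$-manifold regions, and choosing a handle decomposition of the appropriate cobordism realizes each $1$-handle as a tubing or crosscapping attached to $F_0$, while $2$- and $3$-handles are absorbed by isotopy; this finishes the passage from $F_0$ to a surface isotopic to $F_1$.

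The main obstacle is the second step: the surgery disks and arcs must be chosen to avoid creating new intersections, which requires careful innermost-disk and outermost-arc bookkeeping, complicated by the fact that the candidate surgery disks lie in $F_1$, which itself meets $F_0$ elsewhere. An additional subtlety is that an innermost surgery can disconnect $F_0$ or create a component not meeting $L$; when this occurs, performing an extra tubing first restores connectedness without altering the overall conclusion. Orientability must also be tracked throughout, particularly in the non-orientable setting, to correctly distinguish tubings from crosscappings as each surgery is performed.
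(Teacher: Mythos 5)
Your strategy is genuinely different from the paper's route (the paper attributes the theorem to Gordon--Litherland and describes the elementary Bar-Natan--Fulman--Kauffman/Yasuhara argument: isotope to disk-band form, add tubes to obtain checkerboard/algorithmic surfaces of diagrams, then track Reidemeister moves), but as sketched it has two genuine gaps. The first is the endgame. Once the interiors are disjoint, the region between $F_0$ and $F_1$ is indeed a cobordism, but its $2$- and $3$-handles cannot be ``absorbed by isotopy'': viewed from $F_0$, an index-$2$ critical point is a compression of the level surface (a tube \emph{removal}, equivalently a tubing performed from the $F_1$ side), and index-$0$ and index-$3$ points create or cap off closed components, so intermediate levels need not even be spanning surfaces. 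If your claim were correct, $F_1$ would always be obtained from $F_0$ by attachments and isotopy alone, which is impossible whenever $\beta_1(F_0)>\beta_1(F_1)$: for instance, let $F_1$ be any Seifert surface and let $F_0$ be a pushoff of $F_1$ rel boundary with one extra trivial tube; the interiors are disjoint, yet attachments and isotopies can only increase or preserve $\beta_1$. What is needed here is the standard rearrangement of the Morse function (all index $\leq 1$ below all index $\geq 2$), together with an argument handling disconnected levels and closed components.

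The second gap is the identification of your surgeries with the allowed moves, which you assert but which is really the heart of the theorem. For circles, compressing $F_0$ along an innermost disk is the inverse of a tubing only when the curve is two-sided in $F_0$ (if it is one-sided, the ``remove an annulus, glue in two copies of $D$'' operation is not defined), and it is never a crosscapping. For arcs, surgering along an outermost half-disk is a boundary compression: it changes the curve $F_0\cap\partial\nu L$, so you must show the result is still a spanning surface for $L$ (the new boundary can acquire the wrong slope or an inessential component on the torus) and that the move is precisely a crosscap addition or removal. Such arcs are unavoidable whenever $s(F_0)\neq s(F_1)$, since the boundary curves on $\partial\nu L$ then intersect; indeed, for the unknot a disk and a M\"obius band can never have disjoint interiors, as their union would be a closed nonorientable surface embedded in $S^3$. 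So the arc case is exactly where the crosscap moves must enter, and it is the step your sketch leaves unproven; the diagrammatic proof the paper follows sidesteps these general-position issues entirely.
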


Bar-Natan--Fulman--Kauffman describe the following elementary proof of Theorem \ref{T:tube} \cite{bnfk}, and Yasuhara adapts that proof to give a similar, simple proof of Theorem \ref{T:tubekink} in \cite{yas}. 
Given two Seifert surfaces $F_1,F_2$ for $L$, isotope each $F_i$ in a separate copy of $S^3$ into {\it disk-band form}. %\footnote{Explain?}.  
Add tubes where bands cross to change each $F_i$ into an oriented checkerboard surface $F'_i$ for some connected diagram $D_i$ of $L$; %\footnote{It may be necessary to add canceling pairs of half-twists nearby so that the tubing preserves orienability.} 
thus, each $F'_i$ is the ``algorithmic Seifert surface" from $D_i$.  Finally, change $D_1\to D_2$ using Reidemeister moves (without disconnecting the diagram); each Reidemeister move changes the algorithmic Seifert surface by isotopy and/or tubing.
In \cite{encyc}, the second author adapts these arguments to prove the following related fact:

\begin{theorem}[\cite{encyc}]\label{T:encyc}
All state surfaces, and in particular all checkerboard surfaces, for all connected diagrams of a given link $L\subset S^3$ are related by attaching and removing crosscaps.
\end{theorem}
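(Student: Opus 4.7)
The plan is to adapt the Bar-Natan--Fulman--Kauffman \cite{bnfk} / Yasuhara \cite{yas} strategy used to prove Theorems \ref{T:tube} and \ref{T:tubekink}, exploiting the fact cited just above that every state surface is isotopic to a checkerboard surface of some connected diagram of $L$. This reduces the theorem to relating the checkerboard surfaces of any two connected diagrams $D_1,D_2$ of $L$ by crosscap moves.

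First I would show that, within a single connected diagram $D$, all state surfaces are related by crosscap moves. Since any two states differ at finitely many crossings, this reduces to analyzing a single smoothing change at one crossing, which either merges two state circles into one or splits one into two. A local picture at the crossing --- in one direction, a single disk joined to itself by a half-twisted band (a M\"obius configuration) becomes two disks joined by the oppositely twisted band (topologically a disk); in the other direction the process reverses --- shows that the surgery is precisely a crosscap attachment or removal. In particular the two checkerboard surfaces of $D$ are related by crosscap moves.

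Next I would connect $D_1$ to $D_2$ by a sequence of Reidemeister moves, adjusted so that every intermediate diagram remains connected (the standard refinement used in \cite{bnfk, yas}), and track the effect of each move on the relevant checkerboard surface. R1 attaches or removes a crosscap on one checkerboard and is a trivial isotopy on the other. R3 preserves both checkerboards up to isotopy. The essential case is R2: on one checkerboard it is a trivial isotopy, but on the other it locally attaches a handle (tube). To handle R2, I would invoke Dyck's theorem --- a handle attached to a nonorientable surface is isotopic, rel boundary, to two crosscaps --- after first applying crosscap moves (legal by the within-diagram step, allowing us to toggle smoothings at nearby crossings) to guarantee that the relevant checkerboard is nonorientable near the R2 site.

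The main obstacle is the R2 analysis: verifying that the handle introduced by R2 can, after a controlled preliminary crosscap move, be realized as the attachment of two crosscaps via an ambient isotopy of the surface in the link complement, subject to the constraint that the surface remains a spanning surface of the underlying link throughout. The R1 and R3 cases, together with the within-diagram smoothing-change step, reduce to straightforward local computations.
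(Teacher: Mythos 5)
Your overall reduction (first relate all state surfaces of a single diagram by analyzing one smoothing change, then connect two diagrams by Reidemeister moves through connected diagrams) is the same skeleton the paper uses, but two of your Reidemeister claims do not hold up, and one of them leads you into a genuine gap. First, your R3 claim is false: a Reidemeister 3 move does \emph{not} preserve both checkerboard surfaces up to isotopy. The central triangular region changes shade under R3, so the numbers of black and white regions each change by one; consequently one checkerboard surface gains a crosscap and the other loses one. This is exactly what Figure \ref{Fi:kink2} verifies, and it is the classical fact that R3 changes one shading's Goeritz matrix by a $[\pm 1]$ stabilization. The error is absorbable, since kinking moves are permitted anyway, but as stated the claim is wrong. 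Relatedly, your within-diagram step is asserted too locally: re-smoothing a crossing is not literally a disk-versus-M\"obius swap in a ball around the crossing; the actual verification (Figure \ref{Fi:kink1}) adds a kink and then slides one sheet of the surface over another before isotoping, so some such non-local isotopy must be exhibited rather than a purely local picture.

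Second, and more seriously, your R2 analysis rests on an unproved ingredient. Dyck's theorem (handle plus crosscap equals three crosscaps) is a statement about homeomorphism types of abstract surfaces; what you need is an \emph{isotopy} statement for embedded spanning surfaces in $S^3$: that the tube created by the R2 move can be slid, keeping the boundary on $L$ throughout, into a position where it is realized by two crosscap attachments in small balls. That geometric Dyck-type lemma requires its own tube-foot-sliding argument along an orientation-reversing curve, and you only assert it; moreover, your plan to ``toggle smoothings near the R2 site'' to force nonorientability quietly replaces the checkerboard surface by a state surface of a different diagram, which muddies what surface the R2 move is acting on. The paper's proof shows this entire branch is unnecessary: by your own (correct) observation, \emph{one} of the two checkerboard surfaces is carried to a checkerboard surface of the new diagram by an isotopy across an R2 (and an R1) move, and since the within-diagram step already makes the two checkerboard surfaces of any single diagram crosscap-equivalent, it suffices to follow only the well-behaved checkerboard through each move and switch shades before or after as needed. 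Until you either prove the embedded Dyck lemma or reorganize the R2 case along these lines, the argument is incomplete.
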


We call two spanning surfaces {\bf kink-equivalent} if they are related by such moves, which we call {\bf kinking} and {\bf unkinking}.

\begin{figure}
\begin{center}
\labellist
 \pinlabel {$\longrightarrow$} at 172 110
 \pinlabel {$\longrightarrow$} at 372 110
 \pinlabel {$\longrightarrow$} at 572 110
 \pinlabel {$=$} at 772 110
 \pinlabel {$\cup$} at 930 110
\tiny
 \pinlabel {add} at 172 125
 \pinlabel {kink} at 172 95
%ToggleBW% \pinlabel {slide dark} at 372 125 \pinlabel {over light} at 372 95 \pinlabel {isotope} at 572 125 \endlabellist \includegraphics[width=\textwidth]{figures/BW/ResmoothCC}
%ToggleColor%
 \pinlabel {slide blue} at 372 125 \pinlabel {over yellow} at 372 95 \pinlabel {isotope} at 572 125 \endlabellist \includegraphics[width=\textwidth]{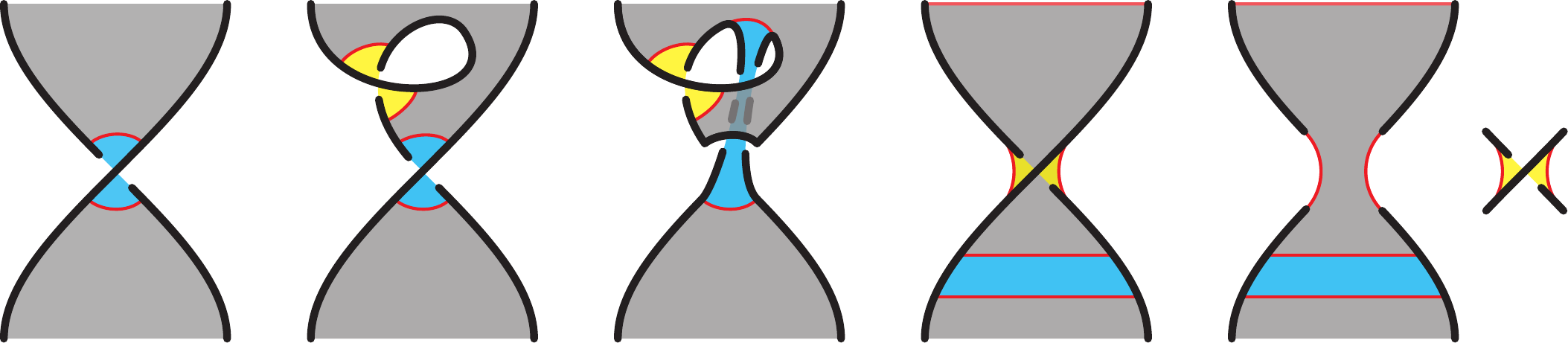}
\caption{Re-smoothing a crossing in a state surface is equivalent to a kinking or unkinking move.}
\label{Fi:kink1}
\end{center}
\end{figure}

\begin{proof}
First, we claim that all state surfaces from {\it a given diagram} $D$, and in particular both checkerboard surfaces from $D$, are kink-equivalent. It suffices to prove this for surfaces from states whose smoothings differ at a single crossing.  Figure \ref{Fi:kink1} does just this.

The rest of the proof comes down to two observations. First, if $D\to D'$ is a Reidemeister 1 or 2 move, then (at least) one of the checkerboard surfaces from $D$ is isotopic to a checkerboard surface from $D'$. Second, as shown in Figure \ref{Fi:kink2}, if $D\to D'$ is a Reidemeister 3 move, then adding the correct kink to the correct checkerboard surface from $D$ gives a surface isotopic to a checkerboard surface from $D'$.
%need ref or proof that any two connected diagrams of a given link are related by a sequence of R-moves that preserve connectedness.
\end{proof}

\begin{figure}
\begin{center}
\labellist
 \pinlabel {$\longrightarrow$} at 1252 140
 \pinlabel {$\longrightarrow$} at 382 120
 \pinlabel {$\longrightarrow$} at 815 130
\tiny
 \pinlabel {add} at 382 145
 \pinlabel {kink} at 382 100
%ToggleBW% \pinlabel {slide dark} at 815 155 \pinlabel {over light} at 815 110 \pinlabel {isotope} at 1252 165 \endlabellist \includegraphics[width=\textwidth]{figures/BW/R3CC}
%ToggleColor%
 \pinlabel {slide blue} at 815 155 \pinlabel {over yellow} at 815 110 \pinlabel {isotope} at 1252 165 \endlabellist \includegraphics[width=\textwidth]{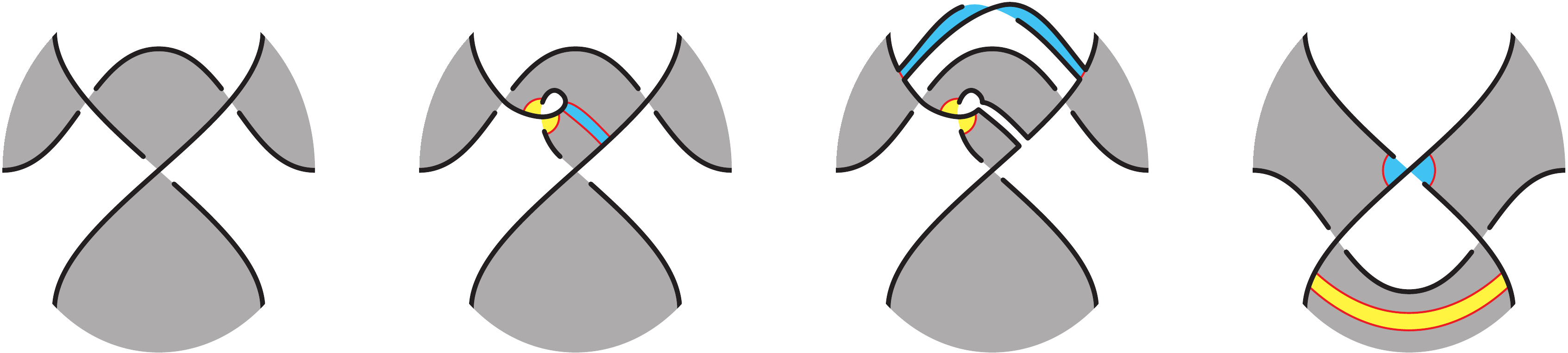}
\caption{A Reidemeister 3 move alters a checkerboard surface by a kinking or unkinking move.}
\label{Fi:kink2}
\end{center}
\end{figure}

Theorem \ref{T:encyc} implies that, given checkerboard surfaces $F_1$ and $F_2$ of diagrams $D_1$ and $D_2$ of a link $L$, their Goeritz matrices $G_1$ and $G_2$ are related by a sequence of congruence, ``kinking,'' and ``unkinking'' moves:

\begin{equation}\label{E:NaiveKink}
G\leftrightarrow P^TGP\text{ (}P\text{ unimodular)}\hspace{.4in}\text{and}\hspace{.4in}
G\leftrightarrow G\oplus[\pm1].
\end{equation}

We say that $G_1$ and $G_2$ are {\bf  kink-equivalent}, and we write $G_1\sim G_2$. 

\subsection{Fake unkinking moves}\label{S:Fake}

We mentioned earlier that not every unkinking move $G\oplus[\pm1]\to G$ on a Goeritz matrix can be realized geometrically as an unkinking move on a given spanning surface.  We will give an explicit example of this phenomenon shortly.  To justify our claims in that example, though, we will need an obstruction.  The obstruction will come from:

\begin{theorem}[Theorem 5.4 of \cite{ak}]
Let $D$ be an alternating diagram of a link $L\subset S^3$, and let $F$ be a spanning surface for $L$.  Then $F$ has the same homeomorphism type and slope as some state surface from $D$, perhaps with tubes or crosscaps attached.  
\end{theorem}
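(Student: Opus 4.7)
The plan is to reduce to the case of an essential spanning surface and then apply a classification of essential spanning surfaces for alternating links as state surfaces of the alternating diagram.

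First, suppose $F$ is not essential. Then $F$ admits a compressing or $\partial$-compressing disk. Compressing $F$ along such a disk produces a simpler spanning surface $F'$ with $\chi(F')>\chi(F)$, and $F$ may be reconstructed from $F'$ by attaching a tube or a crosscap (depending on whether the compressed curve was orientation-preserving or orientation-reversing on $F$). Iterating, we reduce to the case where $F$ is essential; if the theorem holds for this essential surface, then it holds for the original $F$ after attaching the corresponding tubes and crosscaps. The slope is easy to track under this reduction: tubes leave slope unchanged, and each crosscap shifts slope by $\pm 2$.

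Second, and this is the heart of the argument, I would show that every essential spanning surface for an alternating link, arranged in the complement of the alternating diagram $D$, is isotopic to some state surface $F_x$ of $D$. The natural tool is a Menasco-style normal-form analysis: let $S\subset S^3$ be the projection $2$-sphere carrying $D$ and thicken each crossing into a crossing ball. After an isotopy that minimizes $|F\cap S|$ and $|F\cap(\text{crossing balls})|$, innermost-disk and outermost-arc arguments, leveraging essentiality of $F$, reduce the intersections of $F$ with $S\setminus(\text{crossing balls})$ to arcs and the intersection of $F$ with each crossing ball to a stack of saddles. The alternating condition is then exploited to show that stacked saddles inside a crossing ball can always be simplified, leaving a single properly embedded disk that smoothes the crossing one of the two standard ways. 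These smoothing choices determine a state $x$ of $D$, and outside the crossing balls $F$ consists of disks bounded by the resulting state circles (on both sides of $S$), attached to half-twisted bands at the crossings; this is precisely the state surface $F_x$.

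Third, slope is preserved throughout. The self-linking that defines $s(F)$ depends only on a neighborhood of $\partial F=L$, so any isotopy of $F$ rel $\partial F$ preserves slope. Combined with the explicit and well-understood slope contributions from the tubes and crosscaps attached during the compression reduction, this gives the slope matching in the statement.

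I expect the second step to be the principal obstacle. The classification of essential spanning surfaces for alternating links is delicate, especially in the non-orientable setting: one must rule out nontrivial saddle stacks inside crossing balls, ensure that the local smoothing data patches together into a globally consistent state of $D$, and confirm that the constructed normal form is genuinely isotopic to $F$ rather than merely homeomorphic. I would expect the bulk of the technical work in \cite{ak} to be devoted precisely to these normal-form arguments, with the first and third steps above serving as comparatively routine bookkeeping around them.
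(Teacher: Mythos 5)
This statement is not proved in the paper at all: it is imported verbatim as Theorem 5.4 of \cite{ak} and is used only as a black box to certify the fake unkinking move in Example \ref{Ex:Naive}. So your text is an attempted reconstruction of Adams--Kindred's argument, not of anything proved here, and as such a reconstruction it has genuine gaps.

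The main one is your second step. You claim that every essential spanning surface for an alternating link is isotopic to a state surface of $D$. That is stronger than the statement being proved (which asserts only equality of homeomorphism type and slope, with tubes and crosscaps allowed), and it is not what a Menasco-style normal form delivers: after the innermost-disk and outermost-arc reductions, the surface meets the complement of the crossing balls in disks that may lie on \emph{either} side of the projection sphere and at different nesting depths --- the ``layered'' surfaces of \cite{ak} --- whereas a state surface has all its disks on one side. The assertion that a stack of saddles in a crossing ball ``can always be simplified'' to a single saddle is exactly the hard technical content and is stated rather than argued; it also needs hypotheses (a connected, reduced alternating diagram, with primality issues handled) that you never invoke. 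Adams--Kindred in fact proceed differently: they induct on the number of crossings, isotoping an essential surface into standard position at a chosen crossing, smoothing that crossing, and applying the inductive hypothesis to the resulting alternating diagram(s); the comparison of the resulting layered surfaces with honest state surfaces, up to homeomorphism type and slope (possibly after adding crosscaps or tubes), is a separate step that your sketch omits entirely. Your first step also glosses over real issues: compressing along a separating curve splits off a closed component (a connect summand), which is not undone by a single tube; and undoing a $\partial$-compression is not a tube or crosscap attachment unless you first show the compressing arc can be taken to cross the annulus $\partial N(L)\setminus\partial F$, so that the compressed surface is again a spanning surface for $L$ and the slope shifts by exactly $\mp2$. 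As written, that bookkeeping --- which is what makes the slope statement of the theorem true --- is assumed rather than proved.
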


In particular, this solves the geography problem for $L$, which asks which pairs $(s(F),\beta_1(F))$ are realized by its spanning surfaces.  The answer is obtained most efficiently by plotting a point $(s(F),\beta_1(F))$ for each {\it adequate} state surface from $D$, extending upward from each plotted point by two rays with slopes $\pm\frac{1}{2}$, and shading above all rays. Figure \ref{Fi:Geog} shows the geography of (spanning surfaces for) the $8_5$ knot.

\begin{figure}
\labellist
\small\hair 4pt
 \pinlabel {$\boldsymbol{\beta_1}$} [c] at 185 320
 \pinlabel {$\boldsymbol{2}$} [c] at 184 80
 \pinlabel {$\boldsymbol{4}$} [c] at 184 152
 \pinlabel {$\boldsymbol{6}$} [c] at 184 224
 \pinlabel {$\boldsymbol{8}$} [c] at 184 296
 \pinlabel {$\textbf{s}$} [r] at 480 -10
 \pinlabel {$\boldsymbol{4}$} [c] at 242 -10
 \pinlabel {$\boldsymbol{8}$} [c] at 315 -10
 \pinlabel {$\boldsymbol{12}$} [c] at 385 -10
 \pinlabel {$\boldsymbol{-4}$} [c] at 95 -10
 \pinlabel {$\boldsymbol{\sigma}$} [r] at 480 20
 \pinlabel {$\boldsymbol{-2}$} [c] at 242 20
 \pinlabel {$\boldsymbol{0}$} [c] at 315 20
 \pinlabel {$\boldsymbol{2}$} [c] at 385 20
 \pinlabel {$\boldsymbol{-6}$} [c] at 95 20
\endlabellist
%ToggleBW%\includegraphics[width=.8\textwidth]{figures/BW/Geography85}\caption{Geography of the $8_5$ knot (adequate surfaces plotted as black stars) and its implications in Example \ref{Ex:Naive}: note that no surface $F$ has $\beta_1(F)=2$ and $s(F)=8$.}
%ToggleColor%
\includegraphics[width=.8\textwidth]{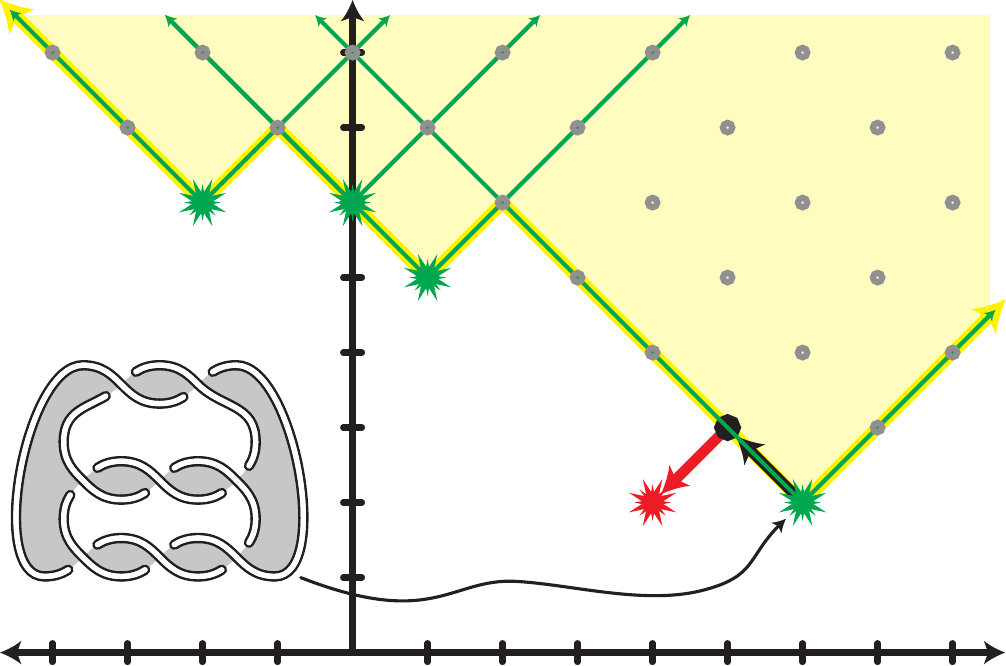}\caption{Geography of the $8_5$ knot (adequate surfaces plotted as green stars) and its implications in Example \ref{Ex:Naive}: note that no surface $F$ has $\beta_1(F)=2$ and $s(F)=8$.}
\label{Fi:Geog}
\end{figure}

\begin{example}\label{Ex:Naive}
A checkerboard surface $F_{12}$ (shown bottom-left in Figure \ref{Fi:Geog}) for a minimal diagram of the $8_5$ knot has $\beta_1(F_{12})=2$ and $s(F_{12})=12$, and adding a negative kink to $F_{12}$ yields a surface $F_{10}$ with $\beta_1(F_{10})=3$ and $s(F_{10})=10$.  Moreover, $F_{12}$ has Goeritz matrix $\bbmes{5&3\\3&6}$, so Goeritz matrices for $F_{10}$ include $\bbmes{5&3\\3&6}\oplus[-1]$ and the congruent matrix\\

\noindent\resizebox{1\linewidth}{!}{$\bbme{1&0&0\\-3&1&0\\2&0&1}\bbme{1&0&2\\0&1&0\\0&0&1}\bbme{5&3&0\\3&6&0\\0&0&-1}\bbme{1&0&2\\0&1&0\\0&0&1}^T\bbme{1&0&0\\-3&1&0\\2&0&1}^T=\bbme{1&0&0\\0&-3&6\\0&6&-5},$}\\

\noindent
which admits a positive unkinking move.  Yet, $F_{10}$ admits no geometric positive unkinking move, because such a move would yield a spanning surface $F_8$ for the $8_5$ knot with $\beta_1(F_{8})=2$ and $s(F_{8})=8$, and Figure \ref{Fi:Geog} shows that no such surface exists, for geography reasons.  Therefore, the positive unkinking move $[1]\oplus\bbmes{-3&6\\6&-5}\to\bbmes{-3&6\\6&-5}$ on the Goeritz matrix for $F_{10}$ does, in fact, describe a {\it fake} unkinking move on $F_{10}$.
\end{example}

\section{Main results}\label{S:Main}

\subsection{The transpose product approach}\label{S:CCT}

One of the defining properties of positive-definite matrices over $\R$ is that they take the form $M=CC^T$ for some matrix $C$ (and in fact one may take $C$ itself to be positive-definite by orthogonally diagonalizing $M=UDU^T$ and setting $C=U\sqrt{D}U^T$, in which case $C$ is referred to as the square root of $M$). It is clear by examining the one-by-one matrix ``$\bs{2}$" that $M$ having integer entries does not necessarily mean that $C$ can be chosen to be a square integer matrix, but it is often still possible to find an integer-$C$ decomposition like $\bs{2}=\bbme{1&1}\bbme{1&1}^T$ by dropping the requirement that $C$ be square. Which positive-definite matrices $G_+$ have such decompositions? For those that do, can we make use of such decompositions to find a negative-definite matrix $G_-\sim G_+$? We begin by answering the second question, or rather a variant of it:

\begin{theorem}\label{thm: ICCT}
If a positive-definite integer matrix $G{_+}$ takes the form $G{_+}=I+CC^T$ for some integer matrix $C$, then $G{_+}$ is kink-equivalent to a negative-definite matrix, namely ${G_-=}-(I+C^TC)$.
\end{theorem}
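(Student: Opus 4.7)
The plan is to exhibit an explicit unimodular congruence after stabilizing both sides appropriately. Since $C$ is $n\times m$, the matrix $G_+=I_n+CC^T$ is $n\times n$ and positive-definite, while the candidate $G_-=-(I_m+C^TC)$ is $m\times m$ and negative-definite; the two have different sizes, so any kink-equivalence between them must involve kinking moves before any congruence. Adding $m$ negative kinks to $G_+$ and $n$ positive kinks to $G_-$ yields two $(n+m)\times(n+m)$ matrices, each of signature $n-m$, and the Weinstein--Aronszajn identity $\det(I_n+CC^T)=\det(I_m+C^TC)$ confirms that they also share an absolute determinant. This is encouraging enough to suggest that a single unimodular congruence should identify the stabilized forms $G_+\oplus(-I_m)$ and $I_n\oplus G_-$.

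My candidate for the congruence is
$$P=\begin{pmatrix} I_n & -C \\ -C^T & I_m+C^TC\end{pmatrix}\in\Z^{(n+m)\times(n+m)}.$$
First I would verify that $P$ is unimodular via the Schur-complement computation $\det P=\det(I_n)\det\bigl(I_m+C^TC-(-C^T)(I_n)^{-1}(-C)\bigr)=\det(I_m)=1$. Then I would compute $P^T\bigl(G_+\oplus(-I_m)\bigr)P$ block by block. The crucial algebraic input is the ``swap identity'' $(I_n+CC^T)C=C(I_m+C^TC)$, which makes the off-diagonal blocks cancel; expanding the bottom-right block should give
$$C^T(I_n+CC^T)C-(I_m+C^TC)^2=(C^TC)+(C^TC)^2-(I_m+C^TC)^2=-(I_m+C^TC),$$
while the top-left block collapses to $I_n+CC^T-CC^T=I_n$. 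Thus the congruence converts $G_+\oplus(-I_m)$ into $I_n\oplus G_-$, and $n$ positive unkinking moves then produce $G_-$.

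The only non-mechanical step is guessing $P$; after that, everything reduces to routine block multiplication using only the swap identity and its transpose $C^T(I_n+CC^T)=(I_m+C^TC)C^T$. My intuition for the shape of $P$ is that when $C$ is square and invertible, $CC^T$ and $C^TC$ are conjugate via $C$ itself, and the ``$+I$'' shifts in $G_\pm$ are exactly the adjustments that let an analogous move survive even when $C$ is rectangular and possibly singular. A pleasant byproduct is that the argument produces $m$ negative kinks followed by a congruence and $n$ positive unkinks, which is a concrete bound in the spirit of Theorem \ref{T:MainMatrix}.
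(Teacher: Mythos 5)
Your proof is correct and takes essentially the same route as the paper: the paper establishes the very same congruence $G_+\oplus(-I_m)\cong I_n\oplus G_-$ (with the same move count of $m$ negative kinks, one unimodular congruence, and $n$ positive unkinks), except that it factors the congruence as a product of the two triangular matrices $\bbmes{I&C\\0&I}$ and $\bbmes{I&0\\C^T&I}$, passing through the symmetric intermediate $\bbmes{I&-C\\-C^T&-I}$, so that unimodularity is visible at a glance. Your single matrix $P$ is precisely that composite up to conjugation by $\mathrm{diag}(I_n,-I_m)$, so your Schur-complement determinant check and the block computation via the swap identity are correct but merely re-derive what the paper gets for free from the triangular factorization.
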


\begin{proof}
Let $G{_+}=I+CC^T$, where $C$ is a (not necessarily square) integer matrix. Then we have:
\[
\bbme{G{_+}}%=\bbme{I+CC^T}
\sim\bbme{I+CC^T&0\\0&-I}
\sim\bbme{I&C\\0&I}\bbme{I+CC^T&0\\0&-I}\bbme{I&0\\C^T&I}
=\bbme{I&-C\\-C^T&-I}.\]
Note the symmetry. Doing the same steps in reverse gives
\[\pushQED\qed
\bbme{G{_+}}%\sim\bbme{I&-C\\-C^T&-I}
\sim\bbme{I&0\\C^T&I}\bbme{I&-C\\-C^T&-I}\bbme{I&C\\0&I}
=\bbme{I&0\\0&-I-C^TC}
\sim%\bbme{-I-C^TC}=
{G_-}.\qedhere\]
\end{proof}

Note that if a matrix $G$ can be represented as $G=I+CC^T$, it can also be represented as {$G=C_0C_0^T$ for $C_0=\bbme{I&C}$}. Hence, if we are to apply Theorem \ref{thm: ICCT}, it is necessary (though not sufficient) that the original matrix be representable as $C_0C_0^T$.
We will see shortly, however, that this is not always possible. First, we point out the following consequence of Theorem \ref{thm: ICCT}:

\begin{cor}\label{C:square}
Every matrix of the form $G=I+CC^T$ for a symmetric integer matrix $C$ is kink-equivalent to $-G$. In particular, this holds for every matrix of the form $\bbme{n^2+1}$, $n\in\Z$.
\end{cor}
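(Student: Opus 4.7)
The plan is to invoke Theorem \ref{thm: ICCT} essentially as a black box and exploit the added hypothesis that $C$ is symmetric. Theorem \ref{thm: ICCT} already establishes that any matrix of the form $G = I + CC^T$ (with $C$ an arbitrary integer matrix) is kink-equivalent to $-(I + C^TC)$. So the entire content of the corollary boils down to rewriting $-(I+C^TC)$ as $-G$ under the symmetry hypothesis, plus choosing an appropriate $C$ for the $1 \times 1$ specialization.

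The key observation is that when $C = C^T$, we have $C^TC = C \cdot C = C \cdot C^T = CC^T$, so
\[
-(I + C^TC) \;=\; -(I + CC^T) \;=\; -G.
\]
Combining this with Theorem \ref{thm: ICCT} gives the first assertion $G \sim -G$ immediately.

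For the ``in particular'' clause, I would specialize to the $1 \times 1$ case by taking $C = [n]$, which is automatically symmetric. Then $I + CC^T = [1] + [n^2] = [n^2+1]$, and applying the first part of the corollary gives $[n^2+1] \sim -[n^2+1]$, as claimed.

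There is no real obstacle here: Theorem \ref{thm: ICCT} does all the heavy lifting, and the corollary is a direct specialization using the elementary identity $C^TC = CC^T$ for symmetric $C$. The only thing worth double-checking is that the argument of Theorem \ref{thm: ICCT} permits $C$ to be non-square or rectangular in the usual way — but since we are restricting to symmetric (hence square) $C$, this is automatic.
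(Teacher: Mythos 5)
Your proof is correct and matches the paper's intent exactly: the corollary is stated as an immediate consequence of Theorem~\ref{thm: ICCT}, using precisely the observation that $C^TC = CC^T$ when $C$ is symmetric, and the specialization $C=[n]$ for the $1\times 1$ case. Nothing is missing.
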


This algebraic fact is not obvious from the motivating topology.  For example, $\bbm 5 \ebm$ is the Goeritz matrix for a spanning surface for the right hand version of the $5_1$ knot, but not for the left hand version, and vice-versa for $\bbme{-5}$. There is no kink-equivalence between the surfaces since they span distinct knots, but there is a kink-equivalence between their matrices:
\[\bbme{5}\sim\bbme{1&2\\0&1}\bbme{5&0\\0&-1}\bbme{1&0\\2&1}=\bbme{1&-2\\-2&-1}=\bbme{0&-1\\-1&2}\bbme{-5&0\\0&1}\bbme{0&-1\\-1&2}\sim\bbme{-5}.\]

The following question remains unresolved (note that kink-equivalence operations preserve the absolute value of the determinant of a matrix):

\begin{question}\label{Q:square}
Is $\bbme{3}\sim\bbme{-3}$?  More generally, given $g\in\Z_+$ such that $g-1$ is not a perfect square, when is $\bbme{g}\sim\bbme{-g}$?
\end{question}

We mention the following related question:

\begin{question}\label{Q:unit det class}
Are all symmetric nonsingular matrices with unit determinant in the kink-equivalence class of the empty matrix?
\end{question}

Theorem \ref{thm: ICCT} can take us only so far in our pursuit of Question \ref{Q:2}:

 \begin{theorem}\label{thm: counterexample matrix}
The matrix 
\[A=\bbme{2&1&1&1&0&0\\1&2&1&1&1&0\\1&1&2&1&1&1\\1&1&1&2&1&1\\0&1&1&1&2&1\\0&0&1&1&1&2}\] 
is positive-definite and cannot be expressed as $CC^T$ for $C$ an integer matrix.
\end{theorem}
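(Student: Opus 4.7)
The plan is to establish the two assertions separately. For positive-definiteness, I would apply Sylvester's criterion: the top-left $k\times k$ block of $A$ for $k\le 4$ is $I_k+J_k$ (where $J_k$ is the all-ones matrix), with eigenvalues $1$ (multiplicity $k-1$) and $k+1$, so determinant $k+1>0$. For $k=5$ and $k=6$ one round of Gaussian elimination produces the complete list of pivots $2,\tfrac32,\tfrac43,\tfrac54,\tfrac45,\tfrac34$ (all positive), giving the remaining two leading minors as $4$ and $\det(A)=3$.

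For the non-representability claim, suppose for contradiction that $A=CC^T$ with $C\in\Z^{6\times m}$, and let $c_1,\ldots,c_6$ denote the rows. Since $A_{ii}=\|c_i\|^2=2$ and $2=1^2+1^2$ is the unique representation of $2$ as a sum of nonnegative integer squares, each $c_i=\epsilon_i e_{a_i}+\delta_i e_{b_i}$ with $a_i\ne b_i$ and $\epsilon_i,\delta_i\in\{\pm1\}$; set $S_i=\{a_i,b_i\}$. A short case check shows $A_{ij}=1$ forces $|S_i\cap S_j|=1$ with matching signs at the shared coordinate, while $A_{ij}=0$ forces either $S_i\cap S_j=\emptyset$ or $S_i=S_j$ (with opposite signs at exactly one coordinate).

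The hard part is a small Helly-type combinatorial observation: any four distinct, pairwise-intersecting $2$-element sets share a common element. The point is that otherwise three of them must form a triangle $\{v,a\},\{v,b\},\{a,b\}$, and then no distinct fourth $2$-set can meet all three pairwise. Applying this to $\{S_1,S_2,S_3,S_4\}$, which are distinct since $A_{ij}=1$ rules out $S_i=S_j$, yields a common index $v$ and forces the other endpoints $x_1,x_2,x_3,x_4$ to be pairwise distinct. Iterating the same step on $\{S_2,S_3,S_4,S_5\}$ and then $\{S_3,S_4,S_5,S_6\}$ shows $v\in S_5$ and $v\in S_6$, so one may write $c_i=\epsilon_i e_v+\delta_i e_{x_i}$ for every $i$. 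The $A_{ij}=1$ entries with $x_i\ne x_j$ read $\epsilon_i\epsilon_j=1$, chaining to a common value $\epsilon_i=\epsilon$.

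The finale is rigid: with all $\epsilon_i=\epsilon$, an off-diagonal $A_{ij}=0$ forces $x_i=x_j$ (else $c_i\cdot c_j=\epsilon^2=1$). So $A_{15}=0$, $A_{16}=0$, and $A_{26}=0$ yield $x_1=x_5$, $x_1=x_6$, and $x_2=x_6$, hence $x_1=x_2$. But $A_{12}=1$ together with $v\in S_1\cap S_2$ requires $x_1\ne x_2$, contradiction. The only subtle point is making sure the Helly step applies --- it needs distinctness of the four supports, which \emph{a priori} could fail (two rows could share a support, giving inner product $0$ or $\pm2$), but each invocation is protected by the $A_{ij}=1$ entries on the pairs in question.
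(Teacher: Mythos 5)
Your proposal is correct, and both halves check out: the pivots $2,\tfrac32,\tfrac43,\tfrac54,\tfrac45,\tfrac34$ are indeed what Gaussian elimination on $A$ produces (leading minors $2,3,4,5,4,3$), and every step of the combinatorial argument survives scrutiny — in particular each invocation of your Helly-type lemma is licensed by the relevant $A_{ij}=1$ entries, and the propagation $v\in S_5$, $v\in S_6$ works because the pairwise-distinct supports $S_2,S_3,S_4$ (resp.\ $S_3,S_4,S_5$) already intersect in exactly $\{v\}$. Your route shares its launching point with the paper's proof — the diagonal entries $2$ force each row of $C$ to be $\pm e_a\pm e_b$ — but then diverges in organization. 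The paper proceeds by sequential normalization: it uses signed column permutations to put $\mathbf{v}_1,\mathbf{v}_2$ in standard position, branches into two cases at $\mathbf{v}_3$, kills one case by a parity-versus-inner-product clash, and in the surviving case forces $\mathbf{v}_5=e_1-e_2$, ending with the observation that $\mathbf{v}_6^T\mathbf{v}_1\equiv\mathbf{v}_6^T\mathbf{v}_5\pmod 2$ while $A$ demands the values $0$ and $1$. You instead prove a structural lemma (four distinct pairwise-intersecting $2$-sets form a star), extract a common coordinate $v$ of all six supports, chain the signs at $v$ along the connected graph of $1$-entries, and contradict via $A_{15}=A_{16}=A_{26}=0$ forcing $S_1=S_2$ against $A_{12}=1$. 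The two endings are morally the same parity obstruction (two rows with equal support have even inner product), but your argument buys freedom from WLOG bookkeeping and case branching, and the star lemma isolates reusable structure that would adapt to other Gram matrices with diagonal $2$; the paper's version is more hands-on and makes the failed candidate rows of $C$ completely explicit, which is useful for seeing how close $A$ comes to being representable.
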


\begin{rem}
The matrix $A$ in Theorem \ref{thm: counterexample matrix} is the Goeritz matrix for the %checkerboard 
surface $F\subset S^3$ shown in Figure \ref{Fi:CounterexampleMatrix} with respect to the indicated basis $(a_1,\hdots,a_6)$ for $H_1(F)$.
\end{rem}

\begin{figure}
\centering
\labellist \small
 \pinlabel {$\white{a_1}$} [c] at 118 300
 \pinlabel {${a_2}$} [c] at 512 485
 \pinlabel {$\white{a_3}$} [c] at 410 203
 \pinlabel {${a_4}$} [c] at 330 270
 \pinlabel {${a_5}$} [c] at 565 468
 \pinlabel {$\white{a_6}$} [c] at 567 277
 \pinlabel {$\white{a_1}$} [c] at 45 130
 \pinlabel {${a_2}$} [c] at 160 333
 \pinlabel {$\white{a_3}$} [c] at 210 67
 \pinlabel {${a_4}$} [c] at 193 50
 \pinlabel {${a_5}$} [c] at 590 295
 \pinlabel {$\white{a_6}$} [c] at 490 277
 \pinlabel {${a_2}$} [c] at 370 330
 \pinlabel {$\white{a_3}$} [c] at 162 248
 \pinlabel {${a_4}$} [c] at 160 230
 \pinlabel {${a_5}$} [c] at 437 310
\endlabellist
\includegraphics[width=.7\textwidth]{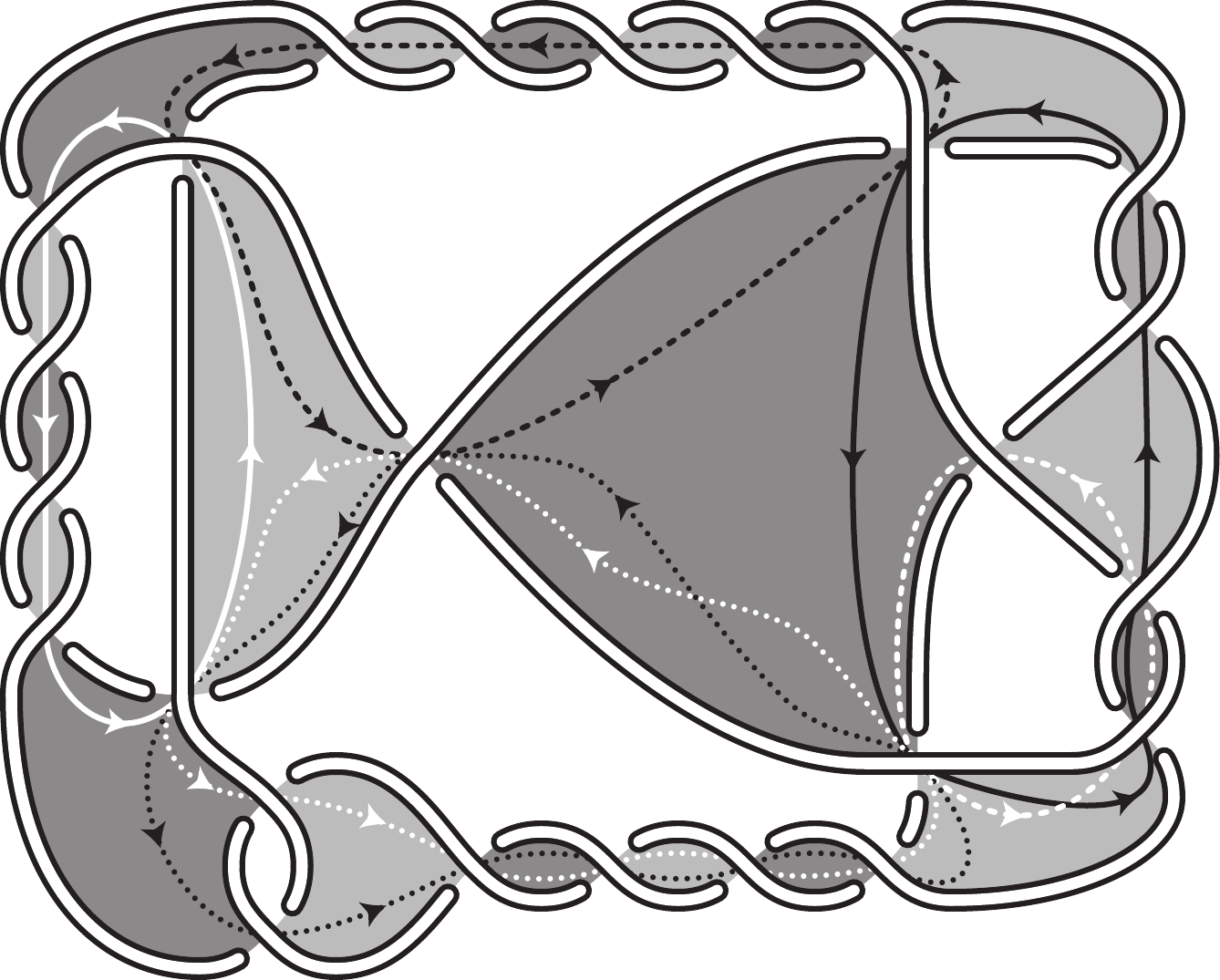}
\caption{Realizing the matrix $A$ from Theorem \ref{thm: counterexample matrix} as a Goeritz matrix}
\label{Fi:CounterexampleMatrix}
\end{figure}

\begin{proof}[Proof of Theorem \ref{thm: counterexample matrix}]
That this matrix is positive-definite is easily verified directly, so only the second claim requires proof.

Suppose by way of contradiction that there is an integer matrix $C$ such that $CC^T=A$.  Write $C^T=\bbme{\mathbf{v}_1&\cdots&\mathbf{v}_6}$, so that $\mathbf{v}_1^T,\hdots,\mathbf{v}_6^T$ are the rows of $C$.  Then $\vb_1^T\vb_1=2$, so we may suppose without loss of generality (by applying signed permutation matrices on the right of $C$, which will cancel with their transposes when multiplying) that $\vb_1^T=\bbme{1&1&0&0&0&\cdots}$.  Further, $\vb_2^T\vb_1=1$ and $\vb_2^T\vb_2=2$, so again without loss of generality $\bbme{\vb_1&\vb_2}^T=\bbmes{1&1&0&0&0&0&\cdots\\1&0&1&0&0&0&\cdots}$. Continuing with this logic, we have two cases: either 
\[\bbme{\vb_1&\vb_2&\vb_3}^T=\bbme{1&1&0&0&0&0&0&\cdots\\1&0&1&0&0&0&0&\cdots\\0&1&1&0&0&0&0&\cdots}\]
or
\[\bbme{\vb_1&\vb_2&\vb_3}^T=\bbme{1&1&0&0&0&0&0&\cdots\\1&0&1&0&0&0&0&\cdots\\1&0&0&1&0&0&0&\cdots}.\]
In the first case, we have both $\vb_1+\vb_2+\vb_3\equiv\0\text{ (mod 2)}$ and $\vb_4^T\vb_1=\vb_4^T\vb_2=\vb_4^T\vb_3=1$, which is impossible.
In the second case, we need $\vb_4^T\vb_1=\vb_4^T\vb_2=\vb_4^T\vb_3=1$ and $\vb_4^T\vb_4=2$. Without loss of generality, it follows that
\[\bbme{\vb_1&\vb_2&\vb_3&\vb_4}^T=\bbme{1&1&0&0&0&0&0&0&\cdots\\1&0&1&0&0&0&0&0&\cdots\\1&0&0&1&0&0&0&0&\cdots\\1&0&0&0&1&0&0&0&\cdots},\] 
which similarly forces
%Now the fifth row must have a dot product of $1$ with all of the recorded rows except the first. If it started with $0$ in the first column, it would have to have ones in third through fifth columns, breaking the condition on its norm. Hence we may assume that it starts with a one. In order for it to have a dot product of zero with the first row and of two with itself, this forces the first five rows of $C$ to be 
\[\bbme{\vb_1&\vb_2&\vb_3&\vb_4&\vb_5}^T=\bbme{1&1&0&0&0&0&0&0&\cdots\\1&0&1&0&0&0&0&0&\cdots\\1&0&0&1&0&0&0&0&\cdots\\1&0&0&0&1&0&0&0&\cdots\\1&-1&0&0&0&0&0&0&\cdots}.\] 
This forces $\mathbf{v}_6^T\vb_1\equiv\vb_6^T\vb_5\text{ (mod 2)}$ (regardless of $\vb_6$), and yet to satisfy $CC^T=A$ we need $\mathbf{v}_6^T\vb_1=0$ and $\vb_6^T\vb_5=1$. Contradiction.
\end{proof}

The following question remains unresolved:

\begin{question}
Which symmetric integer matrices are representable as $CC^T$ for an integer matrix $C$ (not necessarily square)? In particular, 
\begin{enumerate}
\item What is the maximum $n$ for which all $n\times n$ positive-definite matrices are representable as $CC^T$?
\item What is the maximum $n$ for which all $n\times n$ positive-semidefinite matrices are representable as $CC^T$?
\end{enumerate}
\end{question}

 Lagrange's Four Squares Theorem \ref{thm: foursq} and the existence of matrix $A$ from Theorem \ref{thm: counterexample matrix} imply that $2\leq n\leq5$ for both (1) and (2). In fact, for (1), we can improve this a priori lower bound:
 
 \begin{prop}
 Every positive-definite $2\times 2$ integer matrix $A$ is given by $CC^T$ for some  integer matrix $C$ (not necessarily square).
 \end{prop}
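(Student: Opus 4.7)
The plan is to reduce $A$ to a Gauss-reduced form by unimodular congruence, then construct $C$ explicitly using Lagrange's Four Squares Theorem \ref{thm: foursq}.

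First I would observe that the property of being expressible as $CC^T$ is preserved under unimodular congruence: if $A' = C'C'^T$ and $A = P^T A' P$ with $P$ unimodular, then $A = (P^T C')(P^T C')^T$, and $P^T C'$ is an integer matrix. So it suffices to treat any convenient representative of each unimodular congruence class. I would then apply the standard Gauss reduction of binary quadratic forms to $A = \bbme{a & b \\ b & d}$, alternating coordinate swaps (via $\bbme{0 & 1 \\ 1 & 0}$) and shears $Q = \bbme{1 & -k \\ 0 & 1}$, which send $A \mapsto Q^T A Q$ and replace $b$ with $b - ka$ while updating $d$ to $ak^2 - 2bk + d$. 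Choosing $k$ to be the nearest integer to $b/a$ gives $|2(b-ka)| \leq a$ and strictly decreases $d$ whenever $|2b| > a$; since each swap preserves $a+d$ and each nontrivial shear strictly decreases it, the process terminates at a matrix with $|2b| \leq a \leq d$.

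For a reduced $A$, I would take $\vb_1 = (\underbrace{1, \ldots, 1}_{a}, 0, 0, 0, 0)^T \in \Z^{a+4}$, so that $\vb_1 \cdot \vb_1 = a$. I would construct $\vb_2 = (c_1, \ldots, c_a, e_1, e_2, e_3, e_4)^T$ by placing $\operatorname{sign}(b)$ in $|b|$ of the first $a$ positions and $0$ elsewhere; this is possible because $|b| \leq a/2 \leq a$, and it yields $\vb_1 \cdot \vb_2 = b$ and $\sum c_i^2 = |b|$. Finally, I would invoke Lagrange's Four Squares Theorem to choose integers $e_1, e_2, e_3, e_4$ with $e_1^2 + e_2^2 + e_3^2 + e_4^2 = d - |b|$, which is a nonnegative integer since $d \geq a \geq 2|b| \geq |b|$. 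Taking $C$ to be the $2 \times (a+4)$ integer matrix with rows $\vb_1^T$ and $\vb_2^T$ then gives $CC^T = A$ in the reduced case, and the opening observation pulls this back to an integer $C$ realizing the original $A$.

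The only delicate point is verifying the termination and output of the Gauss reduction, in particular the inequality $d \geq a \geq 2|b|$ that underwrites the nonnegativity of $d - |b|$. This is classical, so I do not expect a real obstacle. One minor convention to record: when $b = 0$, the middle step is vacuous and $\vb_2$ is supported entirely on the last four coordinates, realizing a diagonal $A = \operatorname{diag}(a,d)$ directly via Lagrange on each diagonal entry.
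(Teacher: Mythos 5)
Your proof is correct and follows essentially the same route as the paper: reduce $A$ by unimodular congruence to a reduced binary form (the paper cites Cohen's Proposition 5.3.3 rather than re-deriving Gauss reduction) and then exhibit an explicit integer matrix $C$ whose rows realize the reduced entries as dot products. The only difference is cosmetic: where you invoke Lagrange's Four Squares Theorem to realize $d-|b|$ in four extra coordinates, the paper simply appends $c-|b|$ columns equal to $(0,1)^T$, so the four-squares step is unnecessary.
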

 
 \begin{proof}
 By Proposition 5.3.3 of \cite{cohen}, the matrix $A$ is congruent to an integer matrix $A'=\bbmes{a&b\\b&c}$ where $|b|\leq a\leq c$ and, if $a=|b|$ or $a=c$, then $b\geq 0$.  (This matrix $A'$ is called the {\it reduced form} of $A$; see Definition 5.3.2 of \cite{cohen}.) Thus, $A=EA'E^T$ for some unimodular matrix $E$. Let $a'=a-|b|$, $c'=c-|b|$, and
 \[
C= \Big[\underbrace{\bme{1\\0 }~\cdots ~\bme{1\\0 }}_{a'\text{ copies}}\underbrace{\bme{0\\1}~\cdots ~\bme{0\\1}}_{c'\text{ copies}}\underbrace{\bme{1\\ \text{sgn}(b)}~\cdots ~\bme{1\\ \text{sgn}(b)}}_{|b|\text{ copies}}\Big].
 \]
 Then $A'=CC^T$, and thus $A=E(CC^T)E^T=(EC)(EC)^T$.
 \end{proof}

\subsection{Implementing kink-equivalence}

Theorem \ref{thm: counterexample matrix} shows that the $I+CC^T$ approach is insufficient for producing kink-equivalences between positive- and negative-definite matrices in general. Indeed, this approach does not work for the matrix $A$ in Theorem \ref{thm: counterexample matrix}.  The following example shows that this matrix $A$, however, is kink-equivalent to a negative-definite matrix.  We will then see that this example generalizes, giving the algorithm behind Theorem \ref{T:MainMatrix}. 

\begin{example}\label{Ex:A6}
%ToggleBW%\input{ExA6BW}
%ToggleColor%
The matrix $A$ from Theorem \ref{thm: counterexample matrix} is kink-equivalent to the negative-definite matrix $A'=\bbmes{-2&-1\\-1&-2}$ via the following sequence $A=A_0\to A_1\to\cdots\to A_{13}=A'$, in which $A_0\to A_1$ and $A_9\to A_{10}$ are negative kinking moves, $A_3\to A_4$, $A_8\to A_{9}$, and $A_{12}\to A_{13}$ are positive unkinking moves, $A_5\to A_6$ is a triple unkinking move, and the other moves are all congruences. We emphasize certain details by coloring them red or blue, and we de-emphasize permutation matrices used for congruence by shading them light gray. A full explanation will come during and after the proof of Theorem \ref{T:MainMatrix}.

The sequence begins with the negative kinking move $A=A_0\to A_1=A_0\oplus\bbme{{\color{red}-1}}$ and then uses congruence:
\begin{align*}
A_2&=
\bbme{I&{\color{blue}1}\\0&1}A_1\bbme{I&{\color{blue}1}\\0&1}^T
=\bbmes{{\color{cyan}1}&1&1&1&0&0&{\color{blue}-1}\\1&2&1&1&1&0&0\\1&1&2&1&1&1&0\\1&1&1&2&1&1&0\\0&1&1&1&2&1&0\\0&0&1&1&1&2&0\\{\color{blue}-1}&0&0&0&0&0&{\color{red}-1}} 
=\bbme{{\color{cyan}1}&\vb_2^T\\ \vb_2&M_2},\\
A_3&=
\mlg{\bbme{\0& I\\ 1&\0^T}}\bbme{1&\0^T\\ -\vb_2&I}A_2\bbme{1&\0^T\\ -\vb_2&I}^T\mlg{\bbme{\0& I\\ 1&\0^T}^T}
=\bbme{M_2-\vb_2\vb_2^T&\0 \\ \0^T&{\color{red}1}}.
\end{align*}
A positive unkinking move now gives 
\[A_4=\bbme{M_2-\vb_2\vb_2^T}=\bbmes{1&0&0&1&0&1\\0&1&0&1&1&1\\0&0&1&1&1&1\\1&1&1&2&1&0\\0&1&1&1&2&0\\1&1&1&0&0&-2}
=\bbme{I&C_6^T\\ C_6&M_6},\]
after which a congruence allows three positive unkinking moves:
\begin{align*}A_5&=
\mlg{\bbme{0&I\\I&0}}\bbme{1&\0^T\\ -C_6&I}A_4\bbme{1&\0^T\\ -C_6&I}^T\mlg{\bbme{0&I\\I&0}^T}
=\bbme{M_6-C_6C_6^T&0\\0&{\color{red}I}}
=\bbmes{-1&-1&-3&\0^T\\ -1&0&-2&\0^T\\ -3&-2&-5&\0^T\\ \0&\0&\0&{\color{red}I}},\\
A_6&=
\bbme{-1&-1&-3\\ -1&0&-2\\ -3&-2&-5}.
\end{align*}
Two more congruences set up a further positive unkinking move:
\begin{align*}
A_7&
=\mlg{\bbme{0&1&0\\1&0&0\\0&0&1}}\bbme{1&0&0\\-1&1&0\\-3&0&1}A_6\bbme{1&0&0\\-1&1&0\\-3&0&1}^T\mlg{\bbme{0&1&0\\1&0&0\\0&0&1}^T}
=\bbme{1&0&1\\0&-1&0\\1&0&4}
=\bbme{{\color{cyan}1}&\vb_7^T\\ \vb_7&M_7},\\
A_8&
=\mlg{\bbme{\0& I\\ 1&\0^T}}\bbme{1&\0^T\\ -\vb_7&I}A_7\bbme{1&\0^T\\ -\vb_7&I}^T\mlg{\bbme{\0& I\\ 1&\0^T}^T}
=\bbme{M_7-\vb_7\vb_7^T&\0 \\ \0^T&{\color{red}1}},\\
A_9&
=\bbme{M_7-\vb_7\vb_7^T}
=\bbme{-1&0\\0&3}.
\end{align*}
We finish as follows:
\begin{align*}
A_{10}&
=\left(\mlg{\bbme{0&1\\1&0}}A_9\mlg{\bbme{0&1\\1&0}^T}\right)\oplus\bbme{\color{red}{-1}}=\bbme{{\color{cyan}3}&0&0\\0&{\color{red}-1}&0\\0&0&{\color{red}-1}},\\
A_{11}&
=\bbme{1&{\color{blue}1}&{\color{blue}1}\\0&1&0\\0&0&1}A_{10}\bbme{1&{\color{blue}1}&{\color{blue}1}\\0&1&0\\0&0&1}^T
=\bbme{{\color{cyan}1}&{\color{blue}-1}&{\color{blue}-1}\\{\color{blue}-1}&{\color{red}-1}&0\\{\color{blue}-1}&0&{\color{red}-1}}
=\bbme{1&\vb_{11}^T\\ \vb_{11}&M_{11}},\\
A_{12}&
=\mlg{\bbme{0&1&0\\0&0&1\\1&0&0}}\bbme{1&\0^T\\ -\vb_{11}&I}A_{11}\bbme{1&\0^T\\ -\vb_{11}&I}^T\mlg{\bbme{0&1&0\\0&0&1\\1&0&0}^T}
=A'\oplus\bbme{{\color{red}1}},\\
A_{13}&=A'.
\end{align*}

\end{example}

\begin{theorem}\label{T:MainMatrix}
Every kink-equivalence class of symmetric integer matrices contains positive- and negative-semidefinite representatives. 
Thus, for a symmetric matrix $G\in\Z^{n\times n}$, the following are equivalent:
\begin{itemize}
\item $G$ is kink-equivalent to a positive-definite matrix,
\item $G$ is kink-equivalent to a negative-definite matrix,
\item $G$ is nonsingular.
\end{itemize}

Moreover, given a nonsingular symmetric matrix $G\in\Z^{n\times n}$ with $n_+$ positive eigenvalues and $n_-$ negative eigenvalues, there are $\pm$-definite 
integer 
matrices $A_\pm$ that satisfy the congruences  $G\oplus I_{4n_-}\cong A_+\oplus -I_{n_-}$ and $G\oplus-I_{4n_+}\cong A_-\oplus I_{n_+}$.%
%\footnote{That is, $G$ can be changed to, say, a positive-definite matrix via (at most) $4n_-=2(n-\sigma(G))$ positive kinking moves, a congruence, and then $n_-$  negative unkinking moves.}
\end{theorem}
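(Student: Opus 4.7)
The plan is to induct on $n_-(G)$ using a ``kill one negative eigenvalue'' lemma; the dual congruence $G\oplus-I_{4n_+}\cong A_-\oplus I_{n_+}$ follows by the symmetric argument with all signs reversed, so I will focus on $G\oplus I_{4n_-}\cong A_+\oplus-I_{n_-}$. The base case $n_-=0$ is immediate: every symmetric integer matrix with no negative eigenvalue is already positive-semidefinite, and positive-definite in the nonsingular case, so take $A_+=G$. The semidefinite part of the theorem is exactly this induction carried out without a nonsingularity hypothesis (nothing in the reduction will actually require nonsingularity), and the three bulleted equivalences follow from the nonsingular version together with the fact that kink-equivalence preserves $|\det|$ (so being kink-equivalent to a definite matrix forces $G$ to be nonsingular).

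For the inductive step, the key lemma is: if $G\in\Z^{n\times n}$ is symmetric with $n_-(G)\geq 1$, then $G\oplus I_4$ is congruent to $[-1]\oplus\tilde G$ for some symmetric $\tilde G\in\Z^{(n+3)\times(n+3)}$ with $n_+(\tilde G)=n_+(G)+4$ and $n_-(\tilde G)=n_-(G)-1$. First produce a primitive $\mathbf{x}\in\Z^n$ with $-k:=\mathbf{x}^T G\mathbf{x}<0$: start from a real eigenvector for a negative eigenvalue of $G$, perturb to a nearby rational vector in the open set $\{\mathbf{y}:\mathbf{y}^T G\mathbf{y}<0\}$, clear denominators, and divide by the gcd. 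By Fact~\ref{F:Primitive} extend $\mathbf{x}$ to a unimodular $P$ with first column $\mathbf{x}$, so $(P^T G P)_{11}=-k$. Then apply Lagrange's Four Squares Theorem (Theorem~\ref{thm: foursq}) to write $k-1=a^2+b^2+c^2+d^2$, and let $Q$ be the unimodular matrix whose first column is $(1,0,\ldots,0,a,b,c,d)^T$ and whose other columns are standard basis vectors; a direct block calculation shows $\bigl(Q^T((P^T G P)\oplus I_4)Q\bigr)_{11}=-k+(a^2+b^2+c^2+d^2)=-1$. A second, upper-triangular unimodular congruence then uses this $-1$ pivot to zero out the rest of the first row and column by standard Gaussian elimination, producing $[-1]\oplus\tilde G$. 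Sylvester's inertia theorem (Theorem~\ref{T:Syl}) gives $\sigma(\tilde G)=\sigma(G)+5$; combined with size $n+3$ and the preservation of nullity through the chain (since $I_4$ and $[-1]$ are nonsingular), this forces $n_+(\tilde G)=n_+(G)+4$ and $n_-(\tilde G)=n_-(G)-1$.

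Feeding this lemma into the induction yields $\tilde G\oplus I_{4(n_--1)}\cong A_+\oplus-I_{n_--1}$, and direct-summing $[-1]$ on both sides recovers the desired $G\oplus I_{4n_-}\cong A_+\oplus-I_{n_-}$. The individual algebraic steps are exactly the routine congruence calculations illustrated in Example~\ref{Ex:A6}, so the main subtlety is coordinating three pieces of bookkeeping simultaneously---signature, nullity, and kink count---through the repeated reductions, and producing the primitive pivot vector in the first place, which requires just enough analysis (continuity of the quadratic form and density of $\Q^n$ in $\R^n$) that it cannot quite be called purely algebraic.
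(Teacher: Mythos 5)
Your proposal is correct and follows essentially the same route as the paper: the paper's algorithm likewise finds a primitive vector on which the form has the desired sign (via density of $\Q^n$ and Fact~\ref{F:Primitive}), uses Lagrange's Four Squares Theorem to stabilize by four kinks and create a $\pm1$ pivot, clears the first row and column by a unimodular congruence, splits off the pivot, and finishes by Sylvester bookkeeping, induction, and sign symmetry. The only difference is presentational (you run the induction on $n_-$ directly, while the paper describes the step that kills a positive eigenvalue and appeals to symmetry), so there is nothing substantive to add.
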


\begin{proof}
Let $G\in\Z^{n\times n}$ be symmetric with $n_+$ positive eigenvalues, where $n_+>0$.  We will describe an algorithm which gives a sequence of kink-equivalence operations taking $G$ to a symmetric integer matrix with $n_+-1$ positive eigenvalues.  This sequence will involve at most four negative kinking moves but no positive kinking moves, exactly one positive  unkinking move and no negative ones, and several congruences.  The full theorem will then follow by symmetry (between the behavior of positive versus negative eigenvalues) and induction (on $n_+$). 

The algorithm has three steps, the first of which will warrant a closer look.  First, though, here is the full algorithm:

{\bf Step 1:} Find a congruence $G=G_0\to G_1=P_0G_0P_0^T$ so that $G_1$ has a positive entry in its top-left corner, say $G_1=\bbmes{k&\vb^T\\ \vb&M}$, where $k\in\Z_+$, $\vb\in\Z^{n-1}$, and $M\in\Z^{(n-1)\times(n-1)}$.  In Example \ref{Ex:A6}, this leaves the given matrix $A$ unchanged.

%ToggleBW%\input{Step2BW}
%ToggleColor%
{\bf Step 2:} Using the Four Squares Theorem, find $a,b,c,d\in\Z$ with $a^2+b^2+c^2+d^2=k-1$. 
(There are practical algorithms available for finding particular values for $a,b,c,d$ given $k$: see for instance \cite{pt18}.) Let $G_2=G_1\oplus {-I_4}$; if some of  $a,b,c,d$ are zero, one has the option of adding fewer than four (negative) kinks here. Then apply the congruence $G_2\to G_3$ where
\[G_3=\bs{\begin{smallmatrix}1&\Lg{\zeros^T}&{\color{blue}a}&{\color{blue}b}&{\color{blue}c}&{\color{blue}d}\\\Lg{\zeros}&I&\Lg{\zeros}&\Lg{\zeros}&\Lg{\zeros}&\Lg{\zeros}\\\Lg{0}&\Lg{\zeros^T}&1&\Lg{0}&\Lg{0}&\Lg{0}\\\Lg{0}&\Lg{\zeros^T}&\Lg{0}&1&\Lg{0}&\Lg{0}\\\Lg{0}&\Lg{\zeros^T}&\Lg{0}&\Lg{0}&1&\Lg{0}\\\Lg{0}&\Lg{\zeros^T}&\Lg{0}&\Lg{0}&\Lg{0}&1\end{smallmatrix}}\bs{\begin{smallmatrix}k&\mathbf{v}^T&\Lg{0}&\Lg{0}&\Lg{0}&\Lg{0}\\\mathbf{v}&M&\Lg{\zeros}&\Lg{\zeros}&\Lg{\zeros}&\Lg{\zeros}\\\Lg{0}&\Lg{\zeros^T}&{\color{red}-1}&\Lg{0}&\Lg{0}&\Lg{0}\\\Lg{0}&\Lg{\zeros^T}&\Lg{0}&{\color{red}-1}&\Lg{0}&\Lg{0}\\\Lg{0}&\Lg{\zeros^T}&\Lg{0}&\Lg{0}&{\color{red}-1}&\Lg{0}\\\Lg{0}&\Lg{\zeros^T}&\Lg{0}&\Lg{0}&\Lg{0}&{\color{red}-1}\end{smallmatrix}}\bs{\begin{smallmatrix}1&\Lg{\zeros^T}&{\color{blue}a}&{\color{blue}b}&{\color{blue}c}&{\color{blue}d}\\\Lg{\zeros}&I&\Lg{\zeros}&\Lg{\zeros}&\Lg{\zeros}&\Lg{\zeros}\\\Lg{0}&\Lg{\zeros^T}&1&\Lg{0}&\Lg{0}&\Lg{0}\\\Lg{0}&\Lg{\zeros^T}&\Lg{0}&1&\Lg{0}&\Lg{0}\\\Lg{0}&\Lg{\zeros^T}&\Lg{0}&\Lg{0}&1&\Lg{0}\\\Lg{0}&\Lg{\zeros^T}&\Lg{0}&\Lg{0}&\Lg{0}&1\end{smallmatrix}}^T=\bs{\begin{smallmatrix}{\color{cyan}1}&\mathbf{v}^T&{\color{blue}-a}&{\color{blue}-b}&{\color{blue}-c}&{\color{blue}-d}\\\mathbf{v}&M&\Lg{\zeros}&\Lg{\zeros}&\Lg{\zeros}&\Lg{\zeros}\\{\color{blue}-a}&\Lg{\zeros^T}&{\color{red}-1}&\Lg{0}&\Lg{0}&\Lg{0}\\{\color{blue}-b}&\Lg{\zeros^T}&\Lg{0}&{\color{red}-1}&\Lg{0}&\Lg{0}\\{\color{blue}-c}&\Lg{\zeros^T}&\Lg{0}&\Lg{0}&{\color{red}-1}&\Lg{0}\\{\color{blue}-d}&\Lg{\zeros^T}&\Lg{0}&\Lg{0}&\Lg{0}&{\color{red}-1}\end{smallmatrix}}.\]
The $1$ in the top-left of $G_3$ comes from $k-a^2-b^2-c^2-d^2=k-\ps{k-1}=1$. In Example \ref{Ex:A6}, Step 2 takes $A_0\to A_1\to A_2$.

{\bf Step 3:} Write $G_3=\bbmes{1&\wb^T\\ \wb&N}$, where $\wb^T=\bbme{\vb^T&-a&-b&-c&-d}$ and $N=M\oplus-I_4$. Use the 1 in the top-left to clear out the remainder of the first row and column by the congruence $G_3\to G_4$, where
\[G_4=\Lg{\bbme{\0&I\\1&\0^T}}\bbme{1&\0^T\\-\wb&I}\bbme{\fbox{1}&\wb^T\\ \wb&N}\bbme{1&\0^T\\-\wb&I}^T\Lg{\bbme{\0&I\\1&\0^T}}=\left(N-\wb\wb^T\right)\oplus\bbme{1}.\]
Perform the positive  unkinking move $G_4=\left(N-\wb\wb^T\right)\oplus\bbme{1}\to N-\wb\wb^T=G_5$.  By Sylvester's Inertia Theorem \ref{T:Syl}, $G_0,\hdots,G_4$ all have exactly $n_+$ positive eigenvalues and $G_5$ has $n_+-1$ of them. In Example \ref{Ex:A6}, Step 3 takes $A_2\to A_3\to A_4$. \footnote{Note that Step 3 works equally well with identity matrix blocks as with single ones: $\bbme{1&0\\-C&I}\bbme{I&C^T\\C&M}\bbme{1&0\\-C&I}=\bbme{I&0\\0&M-CC^T}$. This % should have 
has %check this?
the same effect as repeatedly applying Step 3, but is somewhat faster by hand.}

Why is Step 1 always possible? The function $f:\R^n\to\R$ given by $f:\mathbf{x}\mapsto\mathbf{x}^TG\mathbf{x}$ attains a positive value because $G$ has a positive eigenvalue, and there is a rational vector $\mathbf{u}\in\Q^n$ with $\mathbf{u}^TG\mathbf{u}>0$ because $\Q^n$ is dense in $\R^n$ and $f$ is continuous.  \footnote{For the sake of keeping the algorithm as constructive as possible: it is also possible to obtain such a $\mathbf{u}$ by first adding a large multiple of the identity to $A$ to produce a matrix with no negative eigenvalues (Gersgorin discs can be used to efficiently find a sufficiently high multiple of $I$ to add) and then applying power iteration to the resulting matrix (modified to keep the normalizing factors rational) to find an explicit computable sequence of rational vectors converging to the eigenvector for $A$ with largest (necessarily positive) eigenvalue. Some entry in this sequence will be an acceptable choice of ${\mathbf{u}}$.}
 Write the entries of $\mathbf{u}$ in lowest terms, 
%and let $d_1$ and $d_2$ be the least common multiples of their denominators and numerators, respectively.
 let $d_1$ be the least common multiple of their denominators, and  let $d_2$ be the least common multiple of their numerators.  Then $\mathbf{b}=\frac{d_1}{d_2}{\mathbf{u}}$ 
is a primitive integer vector satisfying $\mathbf{b}^TG\mathbf{b}>0$, and so by Fact \ref{F:Primitive} there is a unimodular matrix $P$ whose first column is $\mathbf{b}$.  Finally, denoting the first standard basis vector for $\Z^n$ by $\mathbf{e}_1$, the top-left entry in $P^TGP$ is $\mathbf{e}_1^TP^TGP\mathbf{e}_1=\mathbf{b}^TG\mathbf{b}>0$, so $G\to P^TGP$ is our desired congruence for Step 1.
\end{proof}

\section{Implications and further questions}\label{S:Final}

We now explore some of the consequences of our results.

\subsection{Kink-equivalence to one-by-one matrices}

It is interesting to observe that the matrices $A$ and $A'$ in Example \ref{Ex:A6} are  kink-equivalent to a much simpler positive-definite matrix than $A$:
\begin{align*}
A'&=\bbmes{-2&-1\\-1&-2}\sim\bbmes{-2&-1&0\\-1&-2&0\\0&0&1}\sim\bbmes{1&0&1\\0&1&0\\0&0&1}\bbmes{-2&-1&0\\-1&-2&0\\0&0&1}\bbmes{1&0&1\\0&1&0\\0&0&1}^T=\bbmes{-1&-1&1\\-1&-2&0\\1&0&1}\\
&\sim\bbmes{2&-1&1\\-1&1&0\\1&0&0}\bbmes{-1&-1&1\\-1&-2&0\\1&0&1}\bbmes{2&-1&1\\-1&1&0\\1&0&0}^T=\bbmes{3&0&0\\0&-1&0\\0&0&-1}\sim\bbme{3}.
\end{align*}
This motivates the following question, which is related to Question \ref{Q:square}:

\begin{question}
Which kink-equivalence classes contain one-by-one matrices?
\end{question}

\subsection{Consequences} While our initial motivation came from Goeritz matrices for spanning surfaces, Theorem \ref{T:MainMatrix} has implications in any mathematical setting where symmetric integer pairings are used. In particular Theorem \ref{T:MainMatrix} has consequences not just for knots, links, and spanning surfaces, but also for 4-manifolds and quadratic forms.  

\subsubsection{Knots, links, and spanning surfaces}

Recall Greene's Theorem \ref{T:Greene} that a {link }
is alternating if and only if it has spanning surfaces $F_+$ and $F_-$ whose respective Goeritz matrices are positive- and negative-definite \cite{greene}.
{In light of Greene's characterization and Theorem \ref{T:encyc}, it is natural to ask if the alternating condition of a link is encoded algebraically in the kink-equivalence class that contains the Goeritz matrices of that links' checkerboard surfaces. That is, how much information does one lose by ``naively'' considering kink-equivalence of Goeritz matrices rather than the more restrictive kink-equivalence of spanning surfaces?  A lot, it turns out.  Theorem \ref{T:MainMatrix} implies that every knot is in some sense algebraically alternating:}

\begin{theorem}\label{T:MainKnotLink}
Given a link $L\subset S^3$ with nullity 0 and a Goeritz matrix $G$ for any spanning surface for $L$, $G$ is kink-equivalent to a positive-definite matrix and to a negative-definite matrix. In particular, this is true for every {\it knot} in $S^3$.

{Moreover, any Goeritz matrix for any spanning surface for any link in $S^3$ is kink-equivalent to positive- and negative-semidefinite matrices.}
\end{theorem}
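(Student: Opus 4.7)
The proof is essentially an immediate application of Theorem \ref{T:MainMatrix} once one identifies nullity 0 with nonsingularity of the Goeritz matrix. My plan is to handle the three assertions (nonsingular link, knot, general link) in that order.

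First, I would recall that for a link $L\subset S^3$ of nullity 0, any Goeritz matrix $G$ for any spanning surface $F$ is nonsingular: indeed, the nullity of $\G_F$ is a link invariant (see \cite{gl}), and saying $L$ has nullity 0 is by definition saying that this common nullity vanishes. Thus $G$ is a nonsingular symmetric integer matrix, and Theorem \ref{T:MainMatrix} directly produces congruences $G\oplus I_{4n_-}\cong A_+\oplus -I_{n_-}$ and $G\oplus -I_{4n_+}\cong A_-\oplus I_{n_+}$ with $A_\pm$ definite of the asserted sign, witnessing kink-equivalence to both a positive- and a negative-definite matrix.

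For the knot case, I would reduce to the previous paragraph by noting the classical fact that the determinant of a knot $K\subset S^3$ is $|\Delta_K(-1)|$, which is an odd positive integer, so $\det(G)\neq 0$ for every Goeritz matrix. Equivalently, the first homology of the double-branched cover $\Sigma_2(K)$ is finite, which forces $\det(G)\neq 0$. Either way, every knot has nullity 0, and the conclusion follows from the first paragraph.

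For the final ``moreover'' clause concerning arbitrary links (allowing positive nullity), no appeal to the nonsingular case is needed: Theorem \ref{T:MainMatrix} asserts that \emph{every} kink-equivalence class of symmetric integer matrices contains positive- and negative-semidefinite representatives, with no nonsingularity hypothesis. Applied to any Goeritz matrix for any spanning surface for any link in $S^3$, this directly gives the semidefinite conclusion. The only part that required any real content beyond Theorem \ref{T:MainMatrix} was the translation from ``link nullity 0'' to ``Goeritz matrix nonsingular,'' and from ``knot'' to ``link nullity 0''; no new combinatorial or algebraic argument seems necessary here, so I do not anticipate a significant obstacle.
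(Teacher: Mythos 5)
Your proposal is correct and matches the paper's (implicit) argument: the paper offers no separate proof, presenting the theorem as a direct consequence of Theorem \ref{T:MainMatrix} once one knows that nullity 0 (which holds for every knot, since the determinant is odd and hence nonzero) makes every Goeritz matrix nonsingular, and that the semidefinite clause needs no nonsingularity hypothesis. Your added justifications of these translation steps are the standard facts the paper takes for granted.
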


\subsubsection{4-manifolds}

Theorem \ref{T:MainMatrix} can also be interpreted in terms of intersection forms on 4-manifolds vis-a-vis stabilization under positive and negative blow-up:

\begin{theorem}\label{T:Main4Manifold1}
Given any 4-manifold $M$ where $H_2(M)$ has rank $n$ and nonsingular intersection pairing, $\cdot$, write $n_\pm=\frac{1}{2}(n\pm\sigma(M))$. There are $\pm$-definite 4-manifolds $M_\pm$ that yield the following isomorphisms of intersection pairings on blow-ups: 
\begin{align*}
(H_2(M\underset{i=1}{\overset{4n_+}{\#}}%2(n\pm\sigma(M))
\overline{\CP^2}),\cdot)&\cong(H_2(M_-\underset{i=1}{\overset{%\frac{1}{2}(n\pm\sigma(M))
n_+}{\#}}\CP^2),\cdot)\text{ and }\\
(H_2(M\underset{i=1}{\overset{4n_-%2(n\pm\sigma(M))
}{\#}}{\CP^2}),\cdot)&\cong(H_2(M_+\underset{i=1}{\overset{%\frac{1}{2}(n\pm\sigma(M))
n_-}{\#}}\overline{\CP^2}),\cdot).%
\end{align*}
This is also true if one removes ``nonsingular" and replaces ``definite" with ``semidefinite."
\end{theorem}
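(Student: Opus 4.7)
The plan is to translate Theorem~\ref{T:MainMatrix} directly into the 4-manifold setting via the standard dictionary: $\CP^2$ and $\overline{\CP^2}$ each have $H_2 \cong \Z$ with intersection pairing $[+1]$ and $[-1]$ respectively, and $H_2(N_1 \# N_2)$ with its intersection pairing is the orthogonal direct sum of those of $N_1$ and $N_2$. Consequently, blowing up with $\CP^2$ (resp.\ $\overline{\CP^2}$) corresponds exactly to a positive (resp.\ negative) kinking move on the matrix of the intersection form, while choosing a new $\Z$-basis for $H_2$ produces a unimodular congruence of that matrix.

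First, I would fix a basis for the free part of $H_2(M)$ and let $G \in \Z^{n \times n}$ be the matrix of $\cdot$ in that basis; by hypothesis $G$ is nonsingular with $n_+$ positive and $n_-$ negative eigenvalues. Then I would apply Theorem~\ref{T:MainMatrix} to obtain $\pm$-definite integer matrices $A_\pm$ satisfying the unimodular congruences
\[
G \oplus (-I_{4n_+}) \;\cong\; A_- \oplus I_{n_+}
\qquad \text{and} \qquad
G \oplus I_{4n_-} \;\cong\; A_+ \oplus (-I_{n_-}).
\]
Next, I would realize $A_\pm$ geometrically by attaching 2-handles to $B^4$ along a framed link in $S^3$ whose linking matrix equals $A_\pm$; this produces compact $\pm$-definite 4-manifolds $M_\pm$ whose intersection pairings are represented by $A_\pm$. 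Reading off the two displayed congruences through the dictionary in the first paragraph then yields precisely the two claimed isomorphisms of intersection pairings on blow-ups. For the semidefinite version, I would invoke the first, qualitative sentence of Theorem~\ref{T:MainMatrix}, which guarantees semidefinite representatives in every kink-equivalence class, and proceed identically.

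The only substantive point beyond Theorem~\ref{T:MainMatrix} is realizing an arbitrary symmetric integer matrix as the intersection pairing of some 4-manifold; this is routine via handle attachment, and is essentially the only place where one must be explicit about the category in play. Working throughout with compact smooth 4-manifolds with boundary (built from $B^4$ by 2-handle attachments along framed links in $S^3$) is entirely sufficient for the present claim, which concerns only intersection pairings up to isomorphism. The main conceptual obstacle, such as it is, would only arise if one tried to pass to the closed simply connected topological category: that upgrade would require Freedman's theorem together with a unimodularity hypothesis on $G$, and is the separate business of Theorem~\ref{T:Main4Manifold2}.
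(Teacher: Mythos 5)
Your proposal is correct and matches the paper's treatment: the paper offers no separate argument for this theorem, presenting it as a direct translation of Theorem~\ref{T:MainMatrix} under exactly the dictionary you describe (blow-ups $\leftrightarrow$ kinking moves, change of $\Z$-basis $\leftrightarrow$ unimodular congruence), and your realization of $A_\pm$ by $2$-handle attachment supplies the one step the paper leaves implicit in a standard way. The only caveat worth noting is that if $H_2(M)$ has torsion, the asserted isomorphisms should be read on the level of the pairings (i.e.\ modulo torsion, where the intersection form lives), or else $M_\pm$ must be chosen with matching torsion; this looseness is present in the paper's own statement and does not affect your argument.
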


Note that, by Poincare duality, every closed 4-manifold $M$ has unimodular intersection form $Q_M$ \cite{gs}. Recall the following theorem of Freedman:

\begin{theorem}[\cite{freedman,gs}]\label{T:Freedman}
For every unimodular symmetric bilinear form $Q$ there exists a simply connected, closed, topological 4-manifold $M$ whose intersection pairing $Q_M$ is congruent to $Q$.  If $Q$ is odd, \footnote{This means that $\xb^TQ\xb$ is odd for some integer vector $\xb$.} then, up to homeomorphism type, there are exactly two different possibilities for $M$, one with Kirby-Siebenmann invariant 0 and the other with Kirby-Siebenmann invariant 1. \footnote{If $Q$ is even, there is exactly one possibility for $M$, and its Kirby-Siebenmann invariant is $\frac{\sigma(Q)}{8}$ modulo 2, where $\sigma(Q)$ is the signature of $Q$.}
\end{theorem}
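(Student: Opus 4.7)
The plan is to decompose the statement into its existence and uniqueness halves and to treat the Kirby--Siebenmann distinction last. The argument is a genuinely deep theorem whose hard core is Freedman's disk embedding theorem for topological 4-manifolds; what I sketch below is the standard architecture of the proof (see Freedman--Quinn or Gompf--Stipsicz \cite{gs}), noting clearly where the difficulty lies.

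For existence, I would build $M$ by a handle construction tailored to $Q$. Pick a symmetric integer matrix $(q_{ij})$ representing $Q$ in a chosen basis $e_1,\dots,e_n$. Take the 0-handle $B^4$ and attach $n$ 2-handles along an unlink in $\partial B^4=S^3$ with framings $q_{ii}$ and linking numbers $q_{ij}$ between distinct components; call the resulting 4-manifold with boundary $W_Q$. Its intersection form on $H_2$ is visibly $Q$. Because $Q$ is unimodular, $\partial W_Q$ is a homology 3-sphere, in fact a simply connected homology sphere after a small argument, so it is homeomorphic to $S^3$ after surgery considerations --- but that is exactly the sticky point, and one proceeds differently: by Freedman's disk embedding theorem, any collection of framed immersed disks in a simply connected topological 4-manifold with vanishing Wall obstruction can be replaced by locally flat embedded disks, which lets one cap off $\partial W_Q$ with a (topological) contractible 4-manifold. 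The upshot is a simply connected closed topological $M$ with $Q_M\cong Q$.

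For uniqueness up to homeomorphism (fixing $Q$), I would proceed in two stages. First, determine the homotopy type: by Milnor--Whitehead the homotopy type of a simply connected closed 4-manifold is determined by the isomorphism class of its integral intersection form, so any two candidates $M_1,M_2$ realizing $Q$ are homotopy equivalent. Second, upgrade homotopy equivalence to homeomorphism via Freedman's topological surgery / $s$-cobordism theorem in dimension 5: the relevant surgery obstructions vanish in the simply connected setting, so one obtains a topological $s$-cobordism between $M_1$ and $M_2$, which by Freedman's topological $s$-cobordism theorem is homeomorphic to a product. This yields $M_1\cong M_2$ up to the single $\mathbb{Z}/2$ obstruction detected by the Kirby--Siebenmann invariant.

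Finally, I would handle the KS bookkeeping. The KS invariant ${\rm ks}(M)\in\mathbb{Z}/2$ is the primary obstruction to the existence of a PL (equivalently smooth, in dimension 4) structure, and it is additive under connected sum. When $Q$ is even, Rokhlin's theorem together with Freedman's analysis forces ${\rm ks}(M)\equiv\sigma(Q)/8 \pmod{2}$, giving a unique value. When $Q$ is odd, one observes that connected sum with the Chern manifold $|*CP^2|$ (Freedman's fake $\mathbb{CP}^2$, which realizes the form $\langle 1\rangle$ with ${\rm ks}=1$) toggles the KS invariant without changing the form; thus both values $0$ and $1$ are realized, and by the uniqueness in the previous paragraph these are the only two homeomorphism types.

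The one real obstacle --- and it is an enormous one --- is the topological disk embedding theorem underlying both the capping-off step in existence and the surgery/$s$-cobordism step in uniqueness. Its proof requires the theory of Casson handles and Freedman's decomposition showing that every Casson handle is homeomorphic to an open standard 2-handle $B^2\times\mathbb{R}^2$. I would not attempt to reprove this, but would cite \cite{freedman,gs} and assemble the above from its consequences; the handle-theoretic existence argument and the KS bookkeeping are the only parts I would spell out in full.
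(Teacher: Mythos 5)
The paper offers no proof of this statement—it is quoted as background from Freedman and Gompf--Stipsicz \cite{freedman,gs}, exactly the sources you defer to for the disk embedding theorem—so your outline (handle realization of $Q$, capping off, Milnor--Whitehead homotopy classification, topological surgery/$s$-cobordism, and the Kirby--Siebenmann bookkeeping via Rokhlin and the Chern manifold) is the standard architecture and is consistent with the paper's treatment. One aside is incorrect but harmless: $\partial W_Q$ is a homology $3$-sphere yet need not be simply connected or homeomorphic to $S^3$ (for the $E_8$ form it is the Poincar\'e sphere); the step you actually rely on, Freedman's theorem that every homology $3$-sphere bounds a compact contractible topological $4$-manifold, is the right one.
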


The Kirby-Siebenmann invariant is $\Z/2$-valued, is additive under connect sum, and takes the value of 0 for any topological 4-manifold which admits a smooth structure, including $\CP^2$ and $\overline{\CP^2}$, so in particular the Kirby-Siebenmann invariant is unchanged by the positive and negative blow-up operations.

\begin{theorem}\label{T:Main4Manifold2}
Every simply connected, closed, topological 4-manifold $M$ with nonsingular intersection pairing $Q_M$ has a positive blow-up that is homeomorphic to a negative blow-up of a positive-definite, simply connected, closed, topological 4-manifold. This is also true with ``positive" and ``negative'' reversed, and if one removes ``nonsingular" and replaces ``definite" with ``semidefinite."
\end{theorem}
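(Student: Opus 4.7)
The plan is to combine Theorem~\ref{T:MainMatrix} with Freedman's classification (Theorem~\ref{T:Freedman}), exploiting the fact that $\CP^2$ and $\overline{\CP^2}$ are smoothable (so have Kirby--Siebenmann invariant $0$) and that this invariant is additive under connect sum.

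Let $M$ be as in the hypotheses, with $Q_M$ of rank $n$ and $n_{\pm}$ positive/negative eigenvalues. Poincar\'e duality makes $Q_M$ unimodular. If $n_- = 0$, then $M$ is already positive-definite, so take $N = M$ with zero blow-ups on each side. Assume from now on that $n_- > 0$. Theorem~\ref{T:MainMatrix} supplies a positive-definite integer matrix $A_+$ with $Q_M \oplus I_{4n_-} \cong A_+ \oplus -I_{n_-}$, and $A_+$ inherits unimodularity from $Q_M$ (since both sides have determinant $\pm 1$).

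I would now feed $A_+$ into Freedman's theorem to produce a simply connected, closed, topological $4$-manifold $N$ with $Q_N \cong A_+$ and $\text{KS}(N) = \text{KS}(M)$. This matching of Kirby--Siebenmann invariants is the main obstacle: Freedman's classification pins $N$ down uniquely when $A_+$ is even, with $\text{KS}(N) = \sigma(A_+)/8 \bmod 2$, which need not equal $\text{KS}(M)$. The workaround is to replace $A_+$ with $A_+ \oplus I_1$, still positive-definite but now odd (thanks to the added $1$ on the diagonal); the congruence upgrades to $Q_M \oplus I_{4n_- + 1} \cong (A_+ \oplus I_1) \oplus -I_{n_-}$, which amounts to one extra positive kink on each side. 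Having arranged $A_+$ odd, Freedman's theorem provides two choices of $N$, one with each possible KS, so we pick the one matching $\text{KS}(M)$.

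Finally, the positive blow-up $M \mathbin{\#}^{4n_-} \CP^2$ (or $M \mathbin{\#}^{4n_-+1}\CP^2$ after the adjustment) and the negative blow-up $N \mathbin{\#}^{n_-} \overline{\CP^2}$ are both simply connected, closed, topological $4$-manifolds whose intersection pairings are congruent by construction, and whose Kirby--Siebenmann invariants both equal $\text{KS}(M) = \text{KS}(N)$. Freedman's theorem concludes that the two blow-ups are homeomorphic. The reversed statement follows by applying the symmetric congruence $Q_M \oplus -I_{4n_+} \cong A_- \oplus I_{n_+}$ from Theorem~\ref{T:MainMatrix}. The semidefinite extension is automatic in this setting: since $M$ is closed, Poincar\'e duality forces $Q_M$ to be unimodular, hence nonsingular, so ``semidefinite'' collapses to ``definite'' here.
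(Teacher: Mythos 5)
Your proposal is correct and matches the paper's own argument essentially step for step: apply Theorem \ref{T:MainMatrix} to $Q_M$, force the positive-definite form to be odd by adding a single $[1]$ summand (exactly the paper's adjustment), invoke Freedman's Theorem \ref{T:Freedman} to realize it by a manifold with matching Kirby--Siebenmann invariant, and conclude via additivity of KS and $\mathrm{KS}(\CP^2)=\mathrm{KS}(\overline{\CP^2})=0$ that the two blow-ups are homeomorphic. Your extra remarks (the $n_-=0$ case and the observation that unimodularity of $Q_M$ makes the semidefinite clause automatic for closed manifolds) are harmless refinements, not a different method.
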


\begin{proof}
Let $G_M$ be a matrix representing $Q_M$.  By Theorem \ref{T:MainMatrix}, there exist non-negative integers  $m$ and $n$ and a positive-definite matrix $G_+$ such that $G_M\oplus I_m$ and $G_+\oplus -I_n$ are congruent.  Assume that $G_+$ is odd (if $G_+$ were even, we could replace it with $G_+\oplus[1]$ and increase $m$ by one).  By Theorem \ref{T:Freedman}, there is a simply connected, closed, topological 4-manifold $N$ whose intersection pairing is represented by $G_+$ and whose Kirby-Siebenmann invariant matches that of $M$.  The simply connected, closed, topological 4-manifolds $M\#_{i=1}^m\CP^2$ and $N\#_{i=1}^n\overline{\CP^2}$ have isomorphic intersection pairings and matching Kirby-Siebenmann invariants and thus, by Theorem \ref{T:Freedman}, are homeomorphic. 
\end{proof}

\subsubsection{Quadratic forms}

Our results may also be interpreted  in terms of  quadratic forms.   Theorem \ref{T:MainMatrix} implies that, up to (unimodular) change of basis and a stability condition generated by the addition or removal of variables as $Q(x_1,\hdots,x_n)\leftrightarrow Q(x_1,\hdots,x_n)\pm x_{n+1}^2$, any quadratic form $Q:\Z^n\ar\Z$ that comes from a symmetric integer matrix $A\in \Z^{n\times n}$ (as $Q:x\mapsto\mathbf{x}^TA\mathbf{x}$) appears both positive- and negative-definite. That is,  writing the quadratic form $q_0:x\mapsto x^2$, we have the following:

\begin{theorem}\label{T:MainQuadratic}
Let $q:\Z^n\to \Z$ be a nonsingular quadratic form with even-coefficient cross-terms, and write $n_\pm=\frac{1}{2}(n\pm\sigma(q))$. There are $\pm$-definite quadratic forms $q_\pm$ that satisfy the congruences $q\oplus (q_0)^{  4n_-}\cong q_+\oplus (-q_0)^{n_-}$ and $q\oplus (-q_0)^{  4n_+}\cong q_-\oplus (q_0)^{n_+}$.
\end{theorem}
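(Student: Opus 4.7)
The plan is to reduce Theorem \ref{T:MainQuadratic} directly to Theorem \ref{T:MainMatrix} via the standard correspondence between quadratic forms and symmetric matrices. The hypothesis that the cross-terms have even coefficients is precisely what makes this correspondence land in integer matrices rather than half-integer ones, and once we are in the integer setting, the work has already been done.

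First I would set up notation. Write $q(x_1,\ldots,x_n)=\sum_{i} c_{ii}x_i^2+\sum_{i<j}c_{ij}x_ix_j$. Define $A\in\Z^{n\times n}$ by $A_{ii}=c_{ii}$ and $A_{ij}=A_{ji}=c_{ij}/2$ for $i<j$; the even-coefficient hypothesis guarantees $A\in\Z^{n\times n}$, and clearly $q(\xb)=\xb^T A\xb$. Under a unimodular change of basis $\xb=P\yb$ with $P$ unimodular, the form $q$ is taken to $\yb\mapsto\yb^T(P^T A P)\yb$, so unimodular congruence of forms and unimodular congruence of their associated integer matrices coincide. Moreover, the signature $\sigma(q)$ is by definition $\sigma(A)$, so the integers $n_\pm=\tfrac12(n\pm\sigma(q))$ in the statement are exactly the numbers of positive and negative eigenvalues of $A$ that appear in Theorem \ref{T:MainMatrix}.

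Next I would apply Theorem \ref{T:MainMatrix}. Since $q$ is nonsingular, $A$ is nonsingular, so that theorem produces $\pm$-definite integer matrices $A_\pm$ with the unimodular congruences
\[
A\oplus I_{4n_-}\;\cong\; A_+\oplus -I_{n_-}\qquad\text{and}\qquad A\oplus -I_{4n_+}\;\cong\; A_-\oplus I_{n_+}.
\]
Let $q_\pm(\yb)=\yb^T A_\pm\yb$; these are $\pm$-definite integer quadratic forms (they have even-coefficient cross-terms because $A_\pm$ are integer matrices). Because $q_0$ is represented by the $1{\times}1$ matrix $[1]$, the form $q\oplus(q_0)^{4n_-}$ is represented by $A\oplus I_{4n_-}$, and $q_+\oplus(-q_0)^{n_-}$ is represented by $A_+\oplus -I_{n_-}$; similarly for the other identity. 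The matrix-level congruences above therefore translate into the desired form-level congruences.

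I do not expect a serious obstacle here; this is essentially a dictionary argument. The only subtlety worth flagging in the write-up is the even-coefficient hypothesis: if one allows odd cross-term coefficients then $A$ would only be a half-integer symmetric matrix, and Theorem \ref{T:MainMatrix} would not apply directly. This is exactly the gap that motivates the half-integer/rational extension (Theorem \ref{T:RationalExtension}) leading to Theorem \ref{T:MainQuadratic2}, and it is worth remarking on briefly at the end of the proof to make the logical dependence on the even-coefficient assumption transparent.
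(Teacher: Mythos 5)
Your proposal is correct and matches the paper's approach: the paper states Theorem \ref{T:MainQuadratic} as a direct reinterpretation of Theorem \ref{T:MainMatrix}, using exactly the dictionary you describe between quadratic forms with even-coefficient cross-terms and symmetric integer matrices (with $q_0$ corresponding to $[1]$ and unimodular change of variables corresponding to unimodular congruence). Your explicit remark that the even-coefficient hypothesis is what keeps $A$ integral, and that dropping it motivates the rational extension, is precisely the point the paper makes in the surrounding discussion.
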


 Can we extend Theorem \ref{T:MainMatrix} to matrices with half-integers off the diagonal and thus remove the cross-term condition from Theorem \ref{T:MainQuadratic}? 
 Indeed, we can. This is next.
 
 \section{Extension to symmetric rational matrices}\label{S:Quadratic}

We find that Theorem \ref{T:MainMatrix} extends to the case of rational matrices, at the cost of increasing the bound on the number of stabilizations required from $4n$ to $5n$. In extending kink-equivalence to rational matrices we do not relax the requirement that congruence moves are by (integer) unimodular matrices.

\begin{theorem}\label{T:RationalExtension}
Every kink-equivalence class of symmetric rational matrices contains positive- and negative-semidefinite representatives. Moreover, given a nonsingular symmetric matrix $G\in\Q^{n\times n}$ with $n_+$ positive eigenvalues and $n_-$ negative eigenvalues, there are $\pm$-definite 
rational 
matrices $A_\pm$ that satisfy the unimodular congruences  $G\oplus I_{5n_-}\cong A_+\oplus -I_{n_-}$ and $G\oplus-I_{5n_+}\cong A_-\oplus I_{n_+}$.%
\end{theorem}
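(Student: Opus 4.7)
The plan is to adapt the three-step algorithm from the proof of Theorem~\ref{T:MainMatrix}, modifying only Step~2 to accommodate rational top-left entries. The overall structure is preserved: by induction on $n_+$, each iteration will reduce the number of positive eigenvalues by one, en route to a negative-(semi)definite representative. The positive-(semi)definite direction is handled symmetrically.

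Steps~1 and~3 transfer essentially verbatim. For Step~1, a congruence $G\to P^T G P$ makes the top-left entry positive rational: since $G$ has a positive eigenvalue, $\mathbf{x}\mapsto\mathbf{x}^T G\mathbf{x}$ takes positive values on $\mathbb{R}^n$, hence on $\mathbb{Q}^n$ by density; scaling a positive-valued rational vector to a primitive integer vector $\mathbf{b}$ and invoking Fact~\ref{F:Primitive} yields a unimodular $P$ with first column $\mathbf{b}$. For Step~3, the $1$ in the top-left is used to clear the first row and column by congruence, after which the resulting $\oplus[1]$ block is removed by a positive unkinking move; Sylvester's Inertia Theorem~\ref{T:Syl} confirms that exactly one positive eigenvalue has been eliminated.

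The main obstacle is Step~2. The integer algorithm requires the top-left entry to be a positive integer $k$ so that Lagrange's Four Squares Theorem~\ref{thm: foursq} may be applied to $k-1\geq 0$; after the rational Step~1, the top-left is only a positive rational $k=p/q$ in lowest terms with $p,q\geq 1$. To fix this, I would prepend a substep that ``integerizes'' the top-left entry using one extra negative kink. Form $G_1\oplus[-1]$ and perform a congruence whose first column is $(q,0,\dots,0,1)^T\in\mathbb{Z}^{n+1}$, with the trailing $1$ in the entry adjoined by the kink. This column is primitive, since $\gcd(q,1)=1$, and so extends to a unimodular matrix via Fact~\ref{F:Primitive}. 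A direct computation shows the new top-left entry is $q^2 k-1=pq-1$, a non-negative integer. If $k=1$ to begin with, skip directly to Step~3; otherwise $pq-1\geq 1$ and one applies the integer Step~2 exactly as in Theorem~\ref{T:MainMatrix}: write $pq-2=a^2+b^2+c^2+d^2$ via Lagrange, add four more negative kinks, and do a congruence bringing the top-left entry to $1$. The rationality of the other entries plays no role in this calculation, since only the top-left entry drives the reduction.

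Each iteration therefore uses at most $5$ negative kinks and exactly one positive unkink (with fewer needed in edge cases, e.g.\ $k=1$ or $pq=2$). After $n_+$ iterations we obtain a negative-definite rational matrix $A_-$ of size $n+4n_+$, establishing $G\oplus -I_{5n_+}\cong A_-\oplus I_{n_+}$; the symmetric algorithm gives the other congruence. The semidefinite claim follows by terminating the iteration as soon as $n_+=0$, at which point the matrix is already negative-semidefinite.
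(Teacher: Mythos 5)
Your overall strategy --- insert an integerization substep between Steps 1 and 2, paid for by one extra negative kink per iteration, so the bound rises from $4n_\pm$ to $5n_\pm$ --- is exactly the paper's plan, but your execution has a genuine gap. You arrange only for the top-left \emph{entry} to become an integer, and then assert that ``the rationality of the other entries plays no role, since only the top-left entry drives the reduction.'' That assertion is false: Step 3 clears the first row and column by a congruence of the form $\bbmes{1&\0^T\\-\wb&I}$, where $\wb$ is the rest of the first column, and this is a legal move only if $\wb$ is an \emph{integer} vector, since the theorem explicitly retains the requirement that congruences be by integer unimodular matrices. If the corner is $1$ but the remaining first-row entries are non-integral rationals, an integer congruence can change those entries only by integers, so they can never be cleared, and the positive unkinking move of Step 3 is unavailable. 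Your edge case ``$k=1$, skip directly to Step 3'' fails for the same reason. Concretely, with first column $(q,0,\dots,0,1)^T$ where $q$ is only the denominator of the corner $k=p/q$, and the extension to a unimodular matrix $P$ left uncontrolled by Fact~\ref{F:Primitive}, the new first row is $(p,\ q\cdot(\text{old off-corner row}),\ -1)\,P$, whose off-corner entries are in general still non-integral.

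The fix --- and this is what the paper does --- is to integerize the \emph{entire} first row and column at once, still at the cost of a single negative kink: let $d$ be the least common multiple of the denominators of the whole first row (so the matrix is $\bbmes{k/d&\vb^T\!/d\\ \vb/d&M}$ with $k\in\Z_{>0}$ and $\vb$ integral), adjoin $[-1]$, and conjugate by the explicit unimodular matrix $\bbmes{d&\zeros^T&1\\ \zeros&I&\zeros\\ d-1&\zeros^T&1}$, which has determinant $d-(d-1)=1$. The resulting matrix has first row $\bbme{dk-1&\vb^T&(d-1)k-1}$, integral with strictly positive corner, after which Steps 2 and 3 of the integer algorithm apply verbatim (and Sylvester's Inertia Theorem gives the eigenvalue count, as in your writeup). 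So your kink accounting and the inductive structure are fine, but as written the algorithm breaks at Step 3 of every iteration in which the first row has a nontrivial denominator away from the corner.
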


\begin{proof}
Step 1 of the algorithm remains unchanged: apply a unimodular congruence to produce a positive entry in the top-left corner of the matrix. Steps 2 and onward will also remain unchanged, but an integralization step must be inserted between Steps 1 and 2 to ensure that the algorithm continues to use unimodular congruences rather than congruences by general rational unit-determinant matrices. 

{\bf Integralization step:} The output of Step 1 (given a symmetric rational input with at least one positive eigenvalue) is a symmetric rational matrix with positive top-left entry. Call the least common multiple of the denominators in the first row of this matrix $d$, so that it can be written as $\bbmes{k/d&&\mathbf{v}^T\!/{d}\\ \mathbf{v}/{d}&&M}$ with $M$ rational,  $k$ a positive integer, and $\mathbf{v}$ a vector with integer entries. If $d=1$ we can proceed to Step 2 immediately. Otherwise, apply a negative kinking move and then a congruence as follows:
 following kink-equivalences:
\begin{align*}
\bbm\frac{k}{d}&\frac{\mathbf{v}^T}{d}\\\frac{\mathbf{v}}{d}&M\ebm\sim&\bbm\frac{k}{d}&\frac{\mathbf{v}^T}{d}&0\\\frac{\mathbf{v}}{d}&M&\zeros\\0&\zeros^T&-1\ebm\\
\sim&\bbm d&\zeros^T&1\\\zeros&I&\zeros\\d-1&\zeros^T&1\ebm\bbm\frac{k}{d}&\frac{\mathbf{v}^T}{d}&0\\\frac{\mathbf{v}}{d}&M&\zeros\\0&\zeros^T&-1\ebm\bbm d&\zeros^T&1\\\zeros&I&\zeros\\d-1&\zeros^T&1\ebm^T\\
=&\bbm dk-1&\mathbf{v}^T&\ps{d-1}k-1\\\mathbf{v}&M&\frac{d-1}{d}\mathbf{v}\\\ps{d-1}k-1&\frac{d-1}{d}\mathbf{v}^T&\frac{\ps{d-1}^2}{d}k-1\ebm.
\end{align*}
Notice that the resulting matrix has strictly positive top-left entry (because $k\geq1$ and $d>1$ by assumption) and that its first row and column are integer vectors. It is therefore suitable for insertion into Step 2. 
\end{proof}

Theorem \ref{T:RationalExtension} allows us to extend Theorem \ref{T:MainQuadratic} to all integer quadratic forms, without the condition about even cross-terms, and in fact to all rational quadratic forms, at the cost of slightly weakening the bound on the number of stabilizations required.  Again, when extending kink-equivalence to such quadratic forms, we do not relax the requirement that the congruence moves are unimodular. 

\begin{theorem}\label{T:MainQuadratic2}
Let $q:\Z^n\to \Q$ be a nonsingular quadratic form, and write $n_\pm=\frac{1}{2}(n\pm\sigma(q))$ and $q_0:x\mapsto x^2$. There are $\pm$-definite quadratic forms $q_\pm$ that satisfy the congruences $q\oplus (q_0)^{  5n_-}\cong q_+\oplus (-q_0)^{n_-}$ and $q\oplus (-q_0)^{  5n_+}\cong q_-\oplus (q_0)^{n_+}$.
\end{theorem}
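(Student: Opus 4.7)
The plan is to simply translate Theorem \ref{T:RationalExtension} through the standard correspondence between quadratic forms and symmetric matrices. First, I would associate to $q:\Z^n\to\Q$ its symmetric rational Gram matrix $A\in\Q^{n\times n}$ defined by $A_{ii}=q(\mathbf{e}_i)$ and $A_{ij}=\tfrac{1}{2}\bigl(q(\mathbf{e}_i+\mathbf{e}_j)-q(\mathbf{e}_i)-q(\mathbf{e}_j)\bigr)$ for $i\ne j$, so that $q(\mathbf{x})=\mathbf{x}^T A\mathbf{x}$ for all $\mathbf{x}\in\Z^n$. Because $q$ is allowed to take values in $\Q$ rather than only $\Z$, this correspondence is a bijection between quadratic forms $\Z^n\to\Q$ and symmetric matrices in $\Q^{n\times n}$, with no cross-term parity condition required — this is precisely what buys us the extension from Theorem \ref{T:MainQuadratic}.

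Next, I would verify that the three operations involved all transfer cleanly: (i) a unimodular change of variables $\mathbf{x}\mapsto P\mathbf{x}$ on $q$ corresponds to the unimodular congruence $A\mapsto P^T A P$; (ii) the orthogonal direct sum of quadratic forms $q\oplus q'$ corresponds to the block matrix $A\oplus A'$; (iii) the form $q_0(x)=x^2$ corresponds to the $1\times 1$ matrix $[1]$, and $-q_0$ to $[-1]$. The signature $\sigma(q)$ of the form is, by definition, the signature $\sigma(A)$ of the matrix, so $n_\pm=\tfrac12(n\pm\sigma(q))$ counts exactly the positive/negative eigenvalues of $A$. Nonsingularity of $q$ is nonsingularity of $A$.

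Now I would apply Theorem \ref{T:RationalExtension} to $A$: this produces $\pm$-definite rational matrices $A_\pm$ together with unimodular congruences
\[A\oplus I_{5n_-}\cong A_+\oplus(-I_{n_-})\quad\text{and}\quad A\oplus(-I_{5n_+})\cong A_-\oplus I_{n_+}.\]
Translating back via the correspondence, the matrices $A_\pm$ are the Gram matrices of unique $\pm$-definite quadratic forms $q_\pm$, and the above congruences read as the asserted congruences
\[q\oplus(q_0)^{5n_-}\cong q_+\oplus(-q_0)^{n_-}\quad\text{and}\quad q\oplus(-q_0)^{5n_+}\cong q_-\oplus(q_0)^{n_+}.\]

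I do not expect a substantive obstacle here; the theorem is essentially a restatement of Theorem \ref{T:RationalExtension} in quadratic-form language, and the only thing to watch is that the bijection between forms $\Z^n\to\Q$ and symmetric rational matrices preserves the notions of congruence, direct sum, signature, and (semi-)definiteness. The mildly delicate point — and the reason integer-valued quadratic forms with odd cross-term coefficients were previously excluded in Theorem \ref{T:MainQuadratic} — is precisely that we must allow the Gram matrix to have half-integer entries off the diagonal, which is exactly what Theorem \ref{T:RationalExtension} was designed to handle.
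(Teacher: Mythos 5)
Your proposal is correct and is essentially the paper's own route: the paper treats Theorem \ref{T:MainQuadratic2} as an immediate consequence of Theorem \ref{T:RationalExtension}, obtained by passing to the (possibly half-integer off-diagonal) rational Gram matrix and translating congruence, direct sum, signature, and definiteness back and forth exactly as you describe. The dictionary you spell out (unimodular change of variables $\leftrightarrow$ unimodular congruence, $q_0\leftrightarrow[1]$, etc.) is precisely the content the paper leaves implicit, so there is nothing missing.
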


\end{document}